\documentclass{article}

\usepackage{amsfonts, amsmath, amssymb, amsthm, amscd}
\usepackage{ascmac}
\usepackage[dvipdfm]{color}
\usepackage{verbatim}
\usepackage[dvips]{graphicx}
\usepackage{graphics}
\usepackage[all,2cell]{xypic}

\objectmargin+{1mm}
\labelmargin+{0.8mm}
\SelectTips{cm}{12}

\setlength{\topmargin}{-60pt}
\setlength{\headheight}{12truept}
\setlength{\headsep}{25pt}
\setlength{\footskip}{37pt}
\setlength{\hoffset}{10mm}
\setlength{\voffset}{39pt}
\setlength{\oddsidemargin}{-7mm}
\setlength{\evensidemargin}{-7mm}
\setlength{\textheight}{237mm}
\setlength{\textwidth}{153mm}

\UseAllTwocells


\theoremstyle{plain}  
\newtheorem{thm}{Theorem}[section]

\theoremstyle{definition}

\newtheorem*{conv}{Conventions}
\newtheorem{para}[thm]{}

\theoremstyle{remark}
\newtheorem*{claim}{claim}


\DeclareMathOperator{\cA}{\mathcal{A}}
\DeclareMathOperator{\cB}{\mathcal{B}}
\DeclareMathOperator{\cC}{\mathcal{C}}
\DeclareMathOperator{\calD}{\mathcal{D}}
\DeclareMathOperator{\cE}{\mathcal{E}}

\DeclareMathOperator{\cI}{\mathcal{I}}
\DeclareMathOperator{\cJ}{\mathcal{J}}

\DeclareMathOperator{\calL}{\mathcal{L}}
\DeclareMathOperator{\cM}{\mathcal{M}}

\DeclareMathOperator{\cS}{\mathcal{S}}
\DeclareMathOperator{\cT}{\mathcal{T}}
\DeclareMathOperator{\cU}{\mathcal{U}}
\DeclareMathOperator{\cV}{\mathcal{V}}

\DeclareMathOperator{\cX}{\mathcal{X}}
\DeclareMathOperator{\cY}{\mathcal{Y}}



\DeclareMathOperator{\bbZ}{\mathbb{Z}}


\DeclareMathOperator{\fS}{\mathfrak{S}}

\UseAllTwocells


\def\sn{\smallskip\noindent}
\def\mn{\medskip\noindent}



\newcommand{\cf}{\textrm{cf.}\;}
\newcommand{\Cof}{\operatorname{Cof}}
\newcommand{\coim}{\operatorname{Coim}}
\newcommand{\coker}{\operatorname{Coker}}
\newcommand{\colim}{\operatorname{colim}}

\newcommand{\Hom}{\operatorname{Hom}}

\newcommand{\id}{\operatorname{id}}

\newcommand{\Isom}{\operatorname{Isom}}
\newcommand{\isoto}{\overset{\scriptstyle{\sim}}{\to}}

\newcommand{\Ker}{\operatorname{Ker}}

\newcommand{\Mor}{\operatorname{Mor}}

\newcommand{\Ob}{\operatorname{Ob}}
\newcommand{\onto}[1]{\stackrel{#1}{\to}}

\newcommand{\rdef}{\twoheadrightarrow}
\newcommand{\rinc}{\hookrightarrow}
\newcommand{\rinf}{\rightarrowtail}

\newcommand{\Sp}{\operatorname{\bf Sp}}

\title{Fibration theorem for Waldhausen $K$-theory}
\date{}

\author{Satoshi Mochizuki}

\begin{document}

\maketitle

\begin{abstract}
The main result of this paper is a new flavour of 
Waldhausen's fibration theorem for Waldhausen $K$-theory 
under a different set of hypothesis. 
The theorem says that for a small category with 
cofinrations $\cC$ and certain classes of weak equivalences 
$v$ and $w$ in $\cC$, 
a sequence of simplicial categories:
$$vS_{\cdot}\cC^w\to vS_{\cdot}\cC \to wS_{\cdot}\cC$$
is a fibration sequence up to homotopy.
\end{abstract}

\section*{Introduction}

The main purpose of this short note is to give a variant of 
the generic fibration theorem for Waldhausen $K$-theory. 
The theorem is first proven in \cite{Wal85} and 
improved in \cite{Sch06}. 
To give more precise imformation, 
let $\cC$ be a small category with cofibrations in the sense of \cite{Wal85} 
and $v\subset w$ sets of weak equivalences in $\cC$ such that 
$w$ satisfies the extension axiom. 
Then the full subcategory $\cC^w$ of $\cC$ consisting of those of objects $x$ such that 
the canonical morphism $0\to x$ is in 
$w$ is a subcategory with cofibrations in $\cC$ and 
the inclusion functor $\cC^w\rinc \cC$ and 
the identity functor of $\cC$ induce a sequence of 
simplicial categories: 
$$vS_{\cdot}\cC^w\to vS_{\cdot}\cC \to wS_{\cdot}\cC.$$ 
The original theorem in \cite{Wal85} or \cite{Sch06} 
says that if $w$ is saturated and the pair $(\cC,w)$ satisfies 
the factorization axiom, then the sequence above is a fibration sequence up to homotopy. 
In this paper we give an another sufficient applicable condition 
which makes the sequence above a fibration sequence up to homotopy. 
(See Theorem~\ref{thm:fib thm}.) 
In sections $3$ and $4$, 
we will illustrate examples of fibration sequences of $K$-theory.

\begin{conv}
We mainly follow the notations in \cite{Wal85}. 
We say that a class $w$ of weak equivalences in a category of cofibrations 
is {\it extensional} 
if $w$ satisfies the extension axiom. 
For a pair of small categories $\cX$ and $\cY$, 
we write $\cY^{\cX}$ for the category whose objects are 
functors from $\cX$ to $\cY$ and whose morphisms are 
natural transformations. 
\end{conv}

\sn
\textbf{Acknowledgements.} 
The author wishes to express his deep gratitude 
to Marco Schlichting for stimulating discussions. 
He also very thanks to referees for 
carefully reading a preprint version 
of this paper and giving innumerable and valuable comments 
(for example Remark~\ref{rem:not cover}), 
to make the paper more readable.

\section{Sets of morphisms in a small category}
\label{sec:Class mor}

In this section, let $\cC$ and $\calD$ be small categories and 
we write $\Mor \cC$ for the set of all morphisms in $\cC$. 
We mainly study heritability of properties for sets of morphisms in $\cC$ 
by taking a right cofinal subset in \ref{lem:fp com}, 
a pull-back by a functor in \ref{lem:pullback} and 
simplicial constructions in \ref{lem:cC(-;U)}. 
We start by giving a glossary about properties for sets of morphisms.

\begin{para}
\label{df:multiplicative systems}
{\bf Definition.} 
Let $\cS$ be a set of morphisms in $\cC$ containing the 
identity morphisms of objects in $\cC$. 
We say that $\cS$ is a 
{\it multiplicative set} if 
$\cS$ is closed under finite compositions. 
We say that $\cS$ is {\it strictly multiplicative set} 
if all isomorphisms in $\cC$ are in $\cS$ and 
if $\cS$ is closed under finite compositions. 
Suppose that $\bullet \onto{f} \bullet \onto{g} \bullet$ 
is a composable sequence of morphisms in $\cC$. 
If whenever two of $f$, $g$ and $gf$ are in $\cS$, 
then the third morphism is also, 
we say $\cS$ is a {\it saturated set} or 
{\it satisfies the saturation axiom}. 
We regard a multiplicative set of $\cC$ as a subcategory of $\cC$. 
\end{para}

\begin{para}
\label{df:localizing set}
{\bf Definition.} 
Let $\cS$ and $\cT$ be sets of morphisms in $\cC$. 
We set 
$$\cT\circ\cS:=\{fg\in\Mor\cC;
\text{$\bullet\onto{g}\bullet\onto{f}\bullet$ 
where $g$ is in $\cS$ and $f$ is in $\cT$}\}.$$ 
We say that $\cS$ is {\it right permutative} 
with respect to $\cT$ 
if for any morphisms $a\colon x\to z$ in $\cS$ and 
$b\colon y \to z$ in $\cT$, 
there are an object $u$ in $\cC$ and morphisms $a'\colon u \to y$ in $\cS$ and 
$b'\colon u\to x$ in $\cT$ such that $ab'=ba'$. 
We say that $\cS$ is {\it right reversible} 
with respect to $\cT$ 
if for any morphisms $a$, $a'\colon x\to y$ in $\cT$, 
if there exists a morphism $b\colon y \to z$ in $\cS$ such that $ba=ba'$, 
then there exists a morphism $c\colon u\to x$ in $\cS$ such that $ac=a'c$. 
We say that $\cS$ is {\it right {\O}re} 
with respect to $\cT$ 
if $\cS$ is right permutative and right reversible 
with respect to $\cT$. 
We say that $\cS$ is {\it right localizing} ({\it in $\cC$}) 
if $\cS$ is a multiplicative and right {\O}re set with respect to 
$\Mor\cC$. 
We say that $\cT$ is {\it right cofinal} in $\cS$ if 
$\cT\subset \cS$ and 
for any morphism $x\to y$ in $\cS$, there is a morphism 
$z \to x$ in $\cT$ such that the composition 
$z\to y$ is also in $\cT$.
\end{para}

\begin{para}
\label{ex:class of isomorphisms}
{\bf Example. (Set of all isomorphisms).} 
We write $i_{\cC}$ or shorty $i$ for the class of all isomorphisms in $\cC$. 
Then $i_{\cC}$ is a saturated, strictly multiplicative, right localizing set 
in $\cC$.
\end{para}

\begin{para}
\label{lem:fp com}
{\bf Lemma.} 
{\it
Let $\cT\subset\cS$ and $\cU$ be sets of morphisms in $\cC$. 
Assume that $\cT$ is right cofinal in $\cS$.\\
$\mathrm{(1)}$ 
If $\cS$ is right permutative with respect to $\cU$ and 
$\cU\circ\cT\subset \cU$, 
then $\cT$ is also right permutative with respect to $\cU$.\\
$\mathrm{(2)}$ 
If $\cS$ is right reversible with respect to $\cU$, 
then $\cT$ is also right reversible with respect to $\cU$.\\
$\mathrm{(3)}$ 
If $\cT$ is right permutative with respect to $\cU$ and 
$\cT\circ\cU \subset \cU$, 
then $\cS$ is also right permutative with respect to $\cU$.\\
$\mathrm{(4)}$ 
Assume that $\cS$ is saturated and right localizing in $\cC$ and 
$\cT$ is a multiplicative set. 
Then the inclusion functor 
$j\colon\cT \rinc \cS$ is a homotopy equivalence.
}
\end{para}

\begin{proof}
$\mathrm{(1)}$ 
In order to show that $\cT$ is right permutative 
with respect to $\cU$, 
we must produce the dotted morphisms 
$s'''\colon e\to a$ in $\cT$ and 
$f''\colon e\to b$ in $\cU$
in the commutative diagram below with $f \in \cU$ and 
$s\in \cT$:
$$
\xymatrix{
e \ar@{-->}[r]^{f''\in \cU} \ar@{-->}[d]_{s'''\in \cT} & b \ar[d]^{s\in \cT}\\
a \ar[r]_{f\in \cU} & c.
}$$
Then by assumption, 
there are morphisms $f'\colon d\to b$ in $\cU$ and 
$s'\colon d \to a$ in $\cS$ such that $fs'=sf'$. 
Since $\cT$ is right cofinal in $\cS$, 
there is a morphism $s''\colon e\to d$ in $\cT$ such that 
the composition $s's''\colon e\to a$ is in $\cT$. 
By the condition $\cU\circ\cT\subset\cU$, 
the morphism $f's''$ is in $\cU$. 
We shall set $s''':=s's''$ and $f'':=f's''$. 

\sn
$\mathrm{(2)}$ 
In order to show that 
$\cT$ is right reversible with respect to $\cU$, 
for any morphisms $f$, $g\colon a\to b$ in $\cU$ 
such that there is a morphism 
$s\colon b\to c$ in $\cT$ such that $sf=sg$, 
we must produce a morphism $s'''\colon e\to a$ in $\cT$ 
such that $fs'''=gs'''$. 
Then there is a morphism $s'\colon d \to a$ in $\cS$ such that $fs'=gs'$. 
Since $\cT$ is right cofinal in $\cS$, there is a morphism $s''\colon e\to d$ 
such that the composition $s's''\colon e\to a$ is in $\cT$. 
We shall set $s''':=s's''$.

\sn
$\mathrm{(3)}$ 
In order to show that 
$\cS$ is right permutative with respect to $\cU$, 
we must produce the dotted morphisms 
$s''\colon d\to a$ in $\cS$ and 
$f''\colon d\to b$ in $\cU$ in the commutative diagram below 
with $f\in \cU$ and $s\in \cS$: 
$$
\xymatrix{
d \ar@{-->}[r]^{f''\in\cU} \ar@{-->}[d]_{s''\in \cT} & b \ar[d]^{s\in \cS}\\
a \ar[r]_{f\in \cU} & c.
}$$
Then there is a morphism $s'\colon b'\to b$ in $\cT$ such that the composition 
$ss'\colon b'\to c$ is in $\cT$ by assumption. 
Then there are morphisms $s''\colon d\to a$ in $\cT$ and 
$f'\colon d\to b'$ in $\cU$ such that $ss'f'=fs''$. 
By assumption the composition $s'f'\colon d\to b$ is in $\cU$. 
We shall set $f'':=s'f'$. 

\sn
$\mathrm{(4)}$ 
Let $x$ be an object in $\cS$. 
We write $j/x$ for the category whose object is a pair $(y,a)$ of 
an object $y$ in $\cT$ and a morphism $a\colon y\to x$ in $\cS$ and 
whose morphism $\alpha\colon (y,a)\to(z,b)$ 
is a morphism $\alpha\colon y\to z$ in $\cT$ 
such that $a=b\alpha$. 
Since the object $(x,\id_x)$ is in $j/x$, 
the category $j/x$ is a non-empty category.

\begin{claim}
$j/x$ is a cofiltering category. Namely\\
$\mathrm{(a)}$ 
For any objects $(y,a)$ and $(z,b)$ in $j/x$, 
there are morphisms $\alpha\colon (w,c)\to (y,a)$ and 
$\beta\colon (w,c)\to (z,b)$ in $j/x$.\\
$\mathrm{(b)}$ 
For any morphisms $\alpha$, $\beta\colon (y,a)\to (z,b)$ in $j/x$, 
there is a morphism $\gamma\colon (w,c)\to (y,a)$ such that 
$\alpha\gamma=\beta\gamma$.
\end{claim}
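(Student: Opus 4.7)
The plan is to deduce both (a) and (b) from the right {\O}re property of $\cS$ with respect to $\Mor\cC$, coordinated with the right cofinality of $\cT$ in $\cS$ and the saturation axiom; the role of the latter two hypotheses is precisely to transfer constructions produced inside $\cS$ down into $\cT$.

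For (a), given $(y,a)$ and $(z,b)$ with $a,b\in\cS$, I first apply right permutativity of $\cS$ (with respect to $\Mor\cC$) to the cospan $a\colon y\to x$, $b\colon z\to x$, obtaining an object $u$ together with $a'\colon u\to z$ in $\cS$ and $b'\colon u\to y$ in $\Mor\cC$ satisfying $ab'=ba'$. Invoking right cofinality on $a'$, I produce $s_1\colon w_1\to u$ in $\cT$ with $a's_1\in\cT$. At this stage $b's_1$ is only known to lie in $\Mor\cC$; however, from the identity $a(b's_1)=b(a's_1)\in\cS$ together with $a\in\cS$, the saturation axiom forces $b's_1\in\cS$. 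A second invocation of right cofinality then yields $s_2\colon w\to w_1$ in $\cT$ with $b's_1s_2\in\cT$. Setting $\alpha:=b's_1s_2$, $\beta:=a's_1s_2$ (which lies in $\cT$ by multiplicativity of $\cT$), and $c:=a\alpha=b\beta$ (which lies in $\cS$ by multiplicativity of $\cS$) yields the desired span $(y,a)\leftarrow(w,c)\to(z,b)$ in $j/x$.

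For (b), given parallel morphisms $\alpha,\beta\colon(y,a)\to(z,b)$ with $b\alpha=a=b\beta$ and $b\in\cS$, I apply right reversibility of $\cS$ with respect to $\Mor\cC$ to the identity $b\alpha=b\beta$, producing $t\colon w\to y$ in $\cS$ with $\alpha t=\beta t$. Right cofinality applied to $t$ gives $s\colon w'\to w$ in $\cT$ with $ts\in\cT$; taking $\gamma:=ts$ and $c:=a\gamma\in\cS$ exhibits a morphism $\gamma\colon(w',c)\to(y,a)$ in $j/x$ that equalizes $\alpha$ and $\beta$.

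The main obstacle is the asymmetry in (a): the {\O}re square produced by right permutativity has only one leg ($a'$) in $\cS$, so a single pass of cofinality cleans up $a's_1$ but leaves $b's_1$ only in $\Mor\cC$. The pivotal observation is that one can use saturation to upgrade $b's_1$ into $\cS$ — exploiting that $a\in\cS$ and that the post-composition by $a$ already lies in $\cS$ — after which a second pass of cofinality absorbs it into $\cT$. Part (b) is entirely parallel but needs only one cofinality pass, and the remaining verifications are immediate from multiplicativity of $\cS$ and $\cT$.
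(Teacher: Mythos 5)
Your proof is correct and follows essentially the same strategy as the paper's: for part (a), apply right permutativity of $\cS$ with respect to $\Mor\cC$, use saturation to upgrade the second leg of the resulting square into $\cS$, and then make two passes of right cofinality to land both legs in $\cT$; for part (b), apply right reversibility and then one cofinality pass. The only cosmetic difference is that the paper invokes saturation to place $b'$ in $\cS$ before the first cofinality pass, whereas you defer it until after, applying saturation to $b's_1$ instead — but this is an inessential reordering of the same argument.
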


\begin{proof}[Proof of claim]
$\mathrm{(a)}$ 
Since $\cS$ is right permutative with respect to $\Mor\cC$, 
there are morphisms $b'\colon w'\to y$ and $a'\colon w'\to z$ such that 
$a'$ is in $S$. 
Then by the saturated axiom for $\cS$, 
$ba'=ab'$ and $b'$ are also in $\cS$. 
By right cofinality of $\cT$ in $\cS$, 
there is a morphism $t\colon w''\to w'$ such that 
the composition $a't$ is in $\cT$. 
By right cofinality of $\cT$ in $\cS$ again, 
there is a morphism $t'\colon w\to w''$ such that 
the composition $b'tt'$ is in $\cT$. 
Then we set $\alpha:= b'tt''$, $\beta: =a'tt'$ and $c:=a\alpha=b\beta$. 

\sn
$\mathrm{(b)}$ 
Since $\cS$ is right reversible with respect to $\Mor\cC$, 
the equalities $b\alpha=a=b\beta$ implies that 
there is a morphism $t\colon w'\to y$ in $\cS$ such that $\alpha t=\beta t$. 
By right cofinality of $\cT$ in $\cS$, 
there is a morphism $t'\colon w\to w'$ in $\cT$ 
such that the composition $tt'$ is in $\cT$. 
We set $\gamma:=tt'$ and $c:=a\gamma$.
\end{proof}

\sn
By Corollary~2 and Theorem~A in \cite[\S 1]{Qui73}, we obtain the desired result.
\end{proof}

\begin{para}
\label{df:pull back}
{\bf Definition.} 
Let $\phi\colon\cC\to\calD$ be a functor and 
$\cS$ a non-empty set of morphisms in $\calD$. 
We define the set of morphisms $\phi^{-1}\cS$ in $\cC$ 
the {\it pull-back of $\cS$ by $\phi$} by the formula
$$\phi^{-1}\cS\colon =\{f\in\Mor\cC;\phi(f)\in\cS\}.$$ 
\end{para}

\begin{para}
\label{lem:pullback}
{\bf Lemma.}
{\it
Let $\phi\colon\cC\to\calD$ be a functor and 
$\cS$ a non-empty set of morphims in $\calD$. 
Then\\
$\mathrm{(1)}$ 
If $\cS$ is a multiplicative in $\cC$, then $\phi^{-1}\cS$ is also. 
If $\cS$ is a strictly multiplicative in $\cC$, then $\phi^{-1}\cS$ is also. 
If $\cS$ is a saturated set in $\cC$, 
then $\phi^{-1}\cS$ is also.\\
$\mathrm{(2)}$ 
If $\phi$ is full and essentially surjective and if 
$\cT$ is right cofinal in $\cS$, 
then $\phi^{-1}\cT$ is right cofinal in $\phi^{-1}\cS$.\\
$\mathrm{(3)}$ 
If $\phi$ is an equivalence of categories, 
$\cS$ and $\cT$ are strictly multiplicative sets 
and $\cS$ is right permutative 
with respect to $\cT$, 
then $\phi^{-1}\cS$ is right permutative 
with respect to $\phi^{-1}\cT$.\\
$\mathrm{(4)}$ 
If $\phi$ is an equivalence of categories, 
$\cS$ is a strictly multiplicative and right reversible set 
with resect to $\cT$, 
then $\phi^{-1}\cS$ is right reversible 
with respect to $\phi^{-1}\cT$.
}
\end{para}

\begin{proof}
$\mathrm{(1)}$ 
Since a functor sends an identity morphism 
to an identity morphism and 
an isomorphism to an isomorphism, 
if $\cS$ is closed under identity morphisms, 
then $\phi^{-1}\cS$ is also and 
if $\cS$ is closed under isomorphisms, 
then $\phi^{-1}\cS$ is also. 
Let $x\onto{a}y\onto{b}z$ be a pair of composable morphisms in $\cC$. 
If two of $ba$, $b$ and $a$ are in $\phi^{-1}\cS$, 
then two of $\phi(b)\phi(a)$, $\phi(b)$ and $\phi(a)$ are in $\cS$. 
Therefore if $\cS$ is closed under compositions, 
then $\phi^{-1}\cS$ is also and 
if $\cS$ is a saturated set, then $\phi^{-1}\cS$ is also.

\sn
$\mathrm{(2)}$ 
In order to show that 
$\phi^{-1}\cT$ is right cofinal in $\phi^{-1}\cS$, 
for any morphism $s\colon x\to y$ 
in $\phi^{-1}\cS$, 
we must produce a morphism 
$t''\colon z\to x$ in $\phi^{-1}\cT$ such that 
$st''$ is also in $\phi^{-1}\cT$. 
Then by assumption, 
there is a morphism $t\colon z'\to \phi(x)$ in $\cT$ such that the composition 
$\phi(s)t$ is in $\cT$. 
By essential surjectivity of $\phi$, 
there are an object $z$ in $\cC$ and an isomorphism 
$t'\colon \phi(z)\isoto z'$. 
BY fullness of $\phi$, 
there is a morphism $t''\colon z\to x$ in $\cC$ such that $\phi(t'')=tt'$. 
Since $\cT$ is a strictly multiplicative set, 
the composition $tt'$ is in $\cT$ and 
therefore $t''$ and the composition $st''$ are in $\phi^{-1}\cT$. 

\sn
$\mathrm{(3)}$ 
In order to show that 
$\phi^{-1}\cS$ is right permutative with respect to $\phi^{-1}\cT$, 
we must produce the dotted morphisms 
$s''\colon w\to y$ in $\phi^{-1}\cS$ and 
$t''\colon w\to x$ in $\phi^{-1}\cT$ 
in the commutative diagram below with 
$s\in\phi^{-1}\cS$ and $t\in\phi^{-1}\cT$: 
$$
\xymatrix{
w \ar@{-->}[r]^{s''\in\phi^{-1}\cS} \ar@{-->}[d]_{t''\in\phi^{-1}\cT} & 
y \ar[d]^{t\in \phi^{-1}\cT}\\
x \ar[r]_{s\in \phi^{-1}\cS} & z.
}$$
Then there are morphisms $s'\colon w'\to \phi(y)$ in $\cS$ 
and $t'\colon w'\to \phi(x)$ 
in $\cT$ such that $\phi(s)t'=\phi(t)s'$. 
By essential surjectivity of $\phi$, 
there are an object $w$ in $\cC$ and an isomorphism $u\colon\phi(w)\isoto w'$ in $\calD$. 
Since $\cS$ and $\cT$ are strictly multiplicative sets, 
$s'u$ and $t'u$ are in $\cS$ and $\cT$ respectively. 
By fullness of $\phi$, 
there are morphisms $s''\colon w\to y$ and $t''\colon w\to x$ in $\cC$ such that 
$s'u=\phi(s'')$ and $t'u=\phi(t'')$. 
Notice that $s''$ and $t''$ are in 
$\phi^{-1}\cS$ and $\phi^{-1}\cT$ respectively. 
The equality 
$\phi(st'')=\phi(s)t'u=\phi(t)s'u=\phi(ts'')$ 
and faithfulness of $\phi$ imply the equality $st''=ts''$. 

\sn
$\mathrm{(4)}$ 
In order to show that 
$\phi^{-1}\cS$ is right reversible with respect to $\phi^{-1}\cT$, 
for any morphisms 
$a$, $b\colon x\to y$ in $\phi^{-1}\cT$ 
such that there is a morphism 
$s\colon y\to z$ in $\phi^{-1}\cS$ such that $sa=sb$, 
we must produce a morphism $t'\colon w\to x$ in $\phi^{-1}\cS$ 
such that $at'=bt'$. 
Then there is a morphism $t\colon w'\to \phi(x)$ in $\cS$ such that $at=bt$. 
By essential surjectivity of $\phi$, 
there are an object in $\cC$ and 
an isomorphism $u\colon\phi(w)\isoto w'$ in $\calD$. 
Since $\cS$ is a strictly multiplicative set, $tu$ is in $\cS$. 
By fullness of $\phi$, 
there is a morphism $t'\colon w\to x$ in $\cC$ such that $\phi(t')=tu$. 
Notice that $t'$ is in $\phi^{-1}\cS$. 
The equalities $\phi(at')=\phi(a)tu=\phi(b)tu=\phi(bt')$ and 
faithfulness of $\phi$ imply the equality $at'=bt'$.
\end{proof}

\begin{para}
\label{df:cC(-,S)}
{\bf Definition.} 
For a multiplicative set $\cS$ of $\cC$, 
we define $\cC(-,\cS)$ 
to be a simplicial subcategory of $\cC^{[-]}$ 
by sending a totally ordered set $[m]$ to $\cC(m,\cS)$ 
where $\cC(m,\cS)$ is the full subcategory of $\cC^{[m]}$ 
consisting of those functors 
$x\colon [m] \to \cC$ such that 
for any morphism $i\leq j$ in $[m]$, 
$x(i\leq j)$ is in $\cS$. 
For each $m$, 
we denote an object $x_{\cdot}$ in $\cC(m,\cS)$ by 
$$x_{\cdot}\colon x_0 \onto{i^x_0} x_1 \onto{i^x_1} x_2 \onto{i^x_2}\cdots 
\onto{i^x_{m-1}} x_m.$$
For a set $\cT$ of morphisms in $\cC$, 
we define $\cT\cC(m,\cS)$ 
to be the set of morphisms in $\cC(m,\cS)$ by the formula
$$\cT\cC(m,\cS)\colon =\{f\in\Mor\cC(m,\cS);\text{$f_i$ is in $\cT$ for any $0\leq i\leq m$}\}.$$
\end{para}

\begin{para}
\label{lem:cC(-;U)}
{\bf Lemma.} 
{\it
Let $\cS$, $\cT$, $\cU$ and $\cV$ be non-empty sets of morphisms in $\cC$ 
and $n$ a non-negative integer. 
Then\\
$\mathrm{(1)}$ 
Assume that 
$\cT$ is a multiplicative set and right cofinal in $\cS$, 
$\cS$ is right permutative with respect to $\cU$ and 
$\cU\circ\cT\subset\cT\circ\cU$. 
Then $\cT\cC(n,\cU)$ is right cofinal in $\cS\cC(n,\cU)$.\\
$\mathrm{(2)}$ 
Assume that $\cT\circ\cS\subset\cT$ and $\cU\circ\cS\subset\cS\circ\cU$, 
$\cS$ is right permutative with respect to both $\cT$ and $\cU$ and 
all morphisms in $\cS$ are monomorphisms. 
Then $\cS\cC(n,\cU)$ is right permutative with respect to $\cT\cC(n,\cU)$.\\
$\mathrm{(3)}$ 
Assume that 
$\cV$ is a multiplicative, right cofinal set in $\cS$, 
$\cU\circ\cV\subset\cV\circ\cU$ and 
$\cS$ is a multiplicative, right permutative with respect to $\cU$ 
and reversible set with respect to $\cT$. 
Then $\cS\cC(n,\cU)$ is right reversible with respect to $\cT\cC(n,\cU)$.
}
\end{para}

\begin{proof}
We proceed by induction on $n$. 
If $n=0$, then the assertion is hypothesis. 
We assume that $n\geq 1$. 
Let $\iota\colon [n-1]\rinc [n]$ be the inclusion functor. 
We write $\iota^{\ast}\colon\cC(n,\cU)\to\cC(n-1,\cC)$ 
for the induced functor by the composition with $\iota$. 

\sn
$\mathrm{(1)}$ 
In order to show that 
$\cT\cC(n,\cU)$ is right cofinal in $\cS\cC(n,\cU)$, 
for any morphism $s\colon a\to b$ in $\cS\cC(n,\cU)$, 
we must produce a morphism 
$t\colon c\to a$ in $\cT\cC(n,\cU)$ such that 
$st$ is also in $\cT\cC(n,\cU)$. 
Then by assumption, 
there is a morphism $t'\colon c'\to a_n$ in $\cT$ such that 
$s_nt'\colon c'\to b_n$ is in $\cT$. 
Since $\cT$ is right cofinal in $\cS$, 
the morphism $t'\colon c'\to a_n$ is in $\cS$. 
We construct the dotted morphisms 
$t_k''\colon c_k'' \to a_k$ $(0\leq k\leq n-1)$ in $\cS$ 
in the commutative diagram below 
by using the assumption that $\cS$ is right permutative with 
respect to $\cU$ and descending induction on $k$. 
$$
\xymatrix{
c_0'' \ar[r]^{i_0^{c''}} \ar@{-->}[d]_{t_0''} & 
c_1'' \ar[r]^{i_1^{c''}} \ar@{-->}[d]_{t_1''} & 
\cdots \ar[r]^{i_{n-2}^{c''}} & 
c_{n-1}'' \ar[r]^{i_{n-1}^{c''}} \ar@{-->}[d]_{t_{n-1}''} & 
c_n''=c' \ar[d]^{t_n''=t'}\\
a_0 \ar[r]_{i_0^a} & a_1 \ar[r]_{i_1^a} & \cdots \ar[r]_{i_{n-2}^a} & 
a_{n-1} \ar[r]_{i_{n-1}^a} & a_n.
}
$$
Hence there is the morphism 
$t''\colon c''\to a$ in $\cS\cC(n,\cU)$ 
such that $c_n''=c'$ and $t_n''=t'\colon c'\to a_n$. 
By applying the inductive hypothesis to 
$\iota^{\ast}(st'')\colon \iota^{\ast}c''\to \iota^{\ast}b$, 
there is a morphism $t^{(3)}\colon c^{(3)}\to \iota^{\ast}c$ 
in $\cT\cC(n-1,\cU)$ 
such that the composition 
$(\iota^{\ast}(st''))t^{(3)}$ is also in $\cT\cC(n-1,\cU)$. 
Then by the condition $\cU\circ\cT\subset\cT\circ\cU$, 
there are the dotted morphisms 
$j\colon c_{n-1}\to c^{(4)}$ in $\cU$ and 
$t^{(4)}\colon c^{(4)}\to c'=c_n''$ in $\cT$ 
in the commutative diagram below:
$$\xymatrix{
c_{n-1} \ar@{-->}[r]^{j\in \cU} \ar@{-->}[d]_{t_{n-1}^{(3)}\in\cT} & 
c^{(4)} \ar[d]^{t^{(4)}\in\cT}\\
c''_{n-1} \ar[r]_{i_{n-1}^{c''}\in\cU} & c'=c''_n.
}$$
We define $t\colon c\to a$ to be a morphism in $\cT\cC(n;\cU)$ by 
setting as follows.
$$
c_k:=
\begin{cases}
c_k^{(3)} & \text{if $0\leq k\leq n-1 $}\\
c^{(4)} & \text{if $k=n$}
\end{cases}, 
i_k^c:=
\begin{cases}
i_k^{(3)} & \text{if $0\leq k\leq n-2$}\\
j & \text{if $k=n-1$}
\end{cases},
t_k:=
\begin{cases}
t_k''t_k^{(3)} & \text{if $0\leq k\leq n-1$}\\
t_n''t^{(4)} & \text{if $k=n$}.
\end{cases}
$$
Then we can easily check that $st\colon c\to b$ is in $\cT\cC(n,\cU)$. 

\sn
$\mathrm{(2)}$ 
In order to show that 
$\cS\cC(n,\cU)$ is right permutative with respect to 
$\cT\cC(n,\cU)$, 
we must produce the dotted morphisms 
$p\colon d\to a$ in $\cS\cC(n,\cU)$ and 
$q\colon d\to c$ in $\cT\cC(n,\cU)$ 
in the commutative diagram below 
with $s\in\cS\cC(n,\cU)$ and $t\in\cT\cC(n,\cU)$: 
$$
\xymatrix{
d \ar@{-->}[r]^{\ \ \ \ q\in\cT\cC(n,\cU)} \ar@{-->}[d]_{p\in\cS\cC(n,\cU)} & 
c \ar[d]^{s\in\cS\cC(n,\cU)}\\
a \ar[r]_{t\in \cT\cC(n,\cU)} & b.
}$$
Then there are morphisms 
$p'\colon d'\to a_n$ in $\cS$ and 
$q'\colon d'\to c_n$ in $\cT$ 
such that $s_nq'=t_np'$. 
We construct the dotted morphisms 
$p_k''\colon d_k''\to a_k$ $(0\leq k\leq n-1)$ in $\cS$ 
in the commutative diagram below 
by using the assumption that 
$\cS$ is right permutative with respect to $\cU$ and 
descending induction on $k$:
$$
\xymatrix{
d_0'' \ar[r]^{i_0^{d''}} \ar@{-->}[d]_{p_0''} & 
d_1'' \ar[r]^{i_1^{d''}} \ar@{-->}[d]_{p_1''} & 
\cdots \ar[r]^{i_{n-2}^{d''}} & 
d_{n-1}'' \ar[r]^{i_{n-1}^{c''}} \ar@{-->}[d]_{p_{n-1}''} & 
d_n''=d' \ar[d]^{p_n''=p'}\\
a_0 \ar[r]_{i_0^a} & a_1 \ar[r]_{i_1^a} & \cdots \ar[r]_{i_{n-2}^a} & 
a_{n-1} \ar[r]_{i_{n-1}^a} & a_n.
}
$$
Hence we have the morphism 
$p''\colon d''\to a$ in $\cS\cC(n,\cU)$ such that 
$d_n''=d'$ and $p_n''=p'$. 
By applying the inductive hypothesis to 
$\iota^{\ast}(tp'')\colon \iota^{\ast}d''\to \iota^{\ast}b$ 
in $\cT\cC(n-1,\cU)$ 
and 
$\iota^{\ast}s\colon \iota^{\ast}c \to \iota^{\ast}b$ in $\cS\cC(n-1,\cU)$, 
we get the dotted morphisms 
$p^{(3)}\colon d^{(3)}\to i^{\ast}d''$ in $\cS\cC(n-1,\cU)$ 
and 
$q''\colon d^{(3)}\to i^{\ast}c$ 
in $\cT\cC(n-1,\cU)$ 
in the commutative diagram below:
$$
\xymatrix{
d^{(3)} \ar@{-->}[r]^{\ \ \ \ \ \ \ \ q''\in\cT\cC(n-1,\cU)} 
\ar@{-->}[d]_{p^{(3)}\in\cS\cC(n-1,\cU)} & 
\iota^{\ast}c \ar[d]^{\iota^{\ast}s\in\cS\cC(n-1,\cU)}\\
\iota^{\ast}d'' \ar[r]_{\ \ \ \ \ \ \ \ \iota^{\ast}(tp'')\in\cT\cC(n-1,\cU)} & 
\iota^{\ast}b.
}$$
By assumption $\cU\circ\cS\subset\cS\circ\cU$, 
there are morphisms 
$j\colon d^{(3)}_{n-1}\to d^{(4)}$ in $\cU$ and 
$p^{(4)}\colon d^{(4)}\to d'(=d_n'')$ in $\cS$ such that 
$p^{(4)}j=i_{n-1}^{d''}p_{n-1}^{(3)}$. 
We define $p\colon d\to a$ and $q\colon d\to c$ 
to be morphisms in $\cC(n,\cU)$ by setting 
as follows.
$$d_k:=
\begin{cases}
d_k^{(3)} & \text{if $0\leq k\leq n-1$}\\
d^{(4)} & \text{if $k=n$}
\end{cases},\ \ \ 
i_k^d:=
\begin{cases}
i_k^{d^{(3)}} & \text{if $0\leq k\leq n-2$}\\
j & \text{if $k=n$}
\end{cases},$$

$$
p_k:=
\begin{cases}
p_k'' p_k^{(3)} & \text{if $0\leq k\leq n-1$}\\
p' p^{(4)} & \text{if $k=n$}
\end{cases},\ \ \ 
q_k:=
\begin{cases}
q_k'' & \text{if $0\leq k\leq n-1$}\\
q' p^{(4)} & \text{if $k=n$}.
\end{cases}
$$
Notice that $s_n$ is a monomorphism by assumption and 
we have equalities 
\begin{multline*}
s_nq_ni_{n-1}^d = s_nq'p^{(4)}i_{n-1}^d=t_np'i_{n-1}^{d''}p_{n-1}^{(3)}
=t_ni_{n-1}^ap_{n-1}''p_{n-1}^{(3)}\\
=i_{n-1}^bt_{n-1}p_{n-1}''p_{n-1}^{(3)}
=i_{n-1}^bs_{n-1}q_{n-1}=s_ni_{n-1}^cq_{n-1}.
\end{multline*}
$$
\xymatrix{
d_{n-1} \ar[ddd]_{i_{n-1}^d} \ar[rrrr]^{q_{n-1}} \ar[rd]^{p_{n-1}^{(3)}} 
& & & & c_{n-1} \ar[ld]_{s_{n-1}} \ar[ddd]^{i_{n-1}^c}\\
& d_{n-1}'' \ar[r]^{p_{n-1}''} \ar[d]_{i_{n-1}^{d''}} 
& a_{n-1} \ar[r]^{t_{n-1}} \ar[d]^{i_{n-1}^a}
& b_{n-1} \ar[d]^{i^b_{n-1}} & \\
& d' \ar[r]^{p'} \ar[rrrd]^{q'} & a_n \ar[r]^{t_n} & b_n & \\
d_n \ar[rrrr]_{q_n} \ar[ru]_{p^{(4)}} & & & & c_n \ar[ul]_{s_n}.
}$$
Therefore we have the equality $q_ni_{n-1}^d=i_{n-1}^cq_{n-1}$. 
Namely $q\colon d\to c$ is actually a morphism in $\cC(n,\cU)$. 
By definition, 
$p$ is in $\cS\cC(n,\cU)$ and $q$ is in $\cT\cC(n,\cU)$ and we have $sq=tp$.

\sn
$\mathrm{(3)}$ 
In order to show that 
$\cS\cC(n,\cU)$ is right reversible with respect to $\cT\cC(n,\cU)$, 
for any morphisms 
$f$, $g\colon a\to b$ in $\cT\cC(n,\cU)$ 
such that there is a morphism $s\colon b\to c$ 
in $\cS\cC(n,\cU)$ such that $sf=sg$, 
we must produce a morphism $t\colon d\to a$ 
in $\cS\cC(n,\cU)$ such that $ft=gt$. 
Then there is a morphism $t'\colon d'\to a_n$ in $\cS$ such that 
$f_nt'=g_nt'$ by assumption. 
We construct the dotted morphisms 
$d_k''\colon d_k''\to a_k$ $(0\leq k\leq n-1)$ in $\cS$ 
in the commutative diagram below 
by using the assumption that 
$\cS$ is right permutative with respect to $\cU$ and 
descending induction on $k$:
$$
\xymatrix{
d_0'' \ar[r]^{i_0^{d''}} \ar@{-->}[d]_{t_0''} & 
d_1'' \ar[r]^{i_1^{d''}} \ar@{-->}[d]_{t_1''} & 
\cdots \ar[r]^{i_{n-2}^{d''}} & 
d_{n-1}'' \ar[r]^{i_{n-1}^{c''}} \ar@{-->}[d]_{t_{n-1}''} & 
d_n''=d' \ar[d]^{t_n''=t'}\\
a_0 \ar[r]_{i_0^a} & a_1 \ar[r]_{i_1^a} & \cdots \ar[r]_{i_{n-2}^a} & 
a_{n-1} \ar[r]_{i_{n-1}^a} & a_n.
}
$$
Hence there is a morphism $t''\colon d'\to a$ in $\cS\cC(n,\cU)$ such that 
$d_n''=d'$ and $t_n''=t'$. 
By applying the inductive hypothesis to 
$\iota^{\ast}(ft'')$, $\iota^{\ast}(gt'')\colon 
\iota^{\ast}d''\to \iota^{\ast}b$ and 
$\iota^{\ast}s\colon \iota^{\ast}b\to \iota^{\ast}c$, 
there is a morphism $t^{(3)}\colon d^{(3)}\to \iota^{\ast}d''$ in 
$\cS\cC(n-1,\cU)$ such that 
$\iota^{\ast}(ft'')t^{(3)}=\iota^{\ast}(st'')t^{(3)}$. 
Since $\cV\cC(n-1,\cU)$ is a right cofinal set in 
$\cS\cC(n-1,\cU)$ by $\mathrm{(1)}$, 
there is a morphism $t^{(4)}\colon d^{(4)}\to d^{(3)}$ 
in $\cV\cC(n-1,\cU)$ 
such that the composition $t^{(3)}t^{(4)}$ is in $\cV\cC(n-1,\cU)$. 
Then by the condition $\cU\circ\cV\subset\cV\circ\cU$, 
there are morphisms $j\colon d_{n-1}^{(4)}\to d^{(5)}$ in $\cU$ and 
$t^{(5)}\colon d^{(5)}\to d'$ in $\cS$ such that 
$t^{(5)}j=i^{d'}_{n-1}t_{n-1}^{(3)}t_{n-1}^{(4)}$. 
We define $t\colon d\to a$ to be a morphism in $\cS\cC(n;\cU)$ by setting 
as follows.
$$
d_k:=
\begin{cases}
d_k^{(4)} & \text{if $0\leq k\leq n-1$}\\
d^{(5)} & \text{if $k=n$}
\end{cases},
i_k^d:=
\begin{cases}
i_k^{d^{(4)}} & \text{if $0\leq k\leq n-2$}\\
j & \text{if $k=n-1$}
\end{cases},
t_k:=
\begin{cases}
t_k''t_k^{(3)}t_k^{(4)} & \text{if $0\leq k\leq n-1$}\\
t't^{(5)} & \text{if $k=n$}.
\end{cases}
$$
We can easily check that $ft=gt$. 
\end{proof}

\section{Fibration theorem revisited}
\label{sec:Fib thm}

In this section, let $\cC$ be a small category with cofibrations and 
we write $0$ and $\Cof\cC$ for the specific zero object and 
the set of all cofibrations in $\cC$ respectively. 
For any set of morphisms $u$ in $\cC$, 
we write $\cC^u$ for the full subcategory 
of those objects $x$ such that the canonical morphism $0\to x$ is in $u$. 
If a set of weak equivalences $w$ in $\cC$ is extensional, 
then $\cC^w$ is a subcategory with cofibrations in $\cC$. 
For any set of weak equivalences $w$ in $\cC$, 
we set $\bar{w}\colon =w\cap \Cof\cC$. 
Then the set $\bar{w}$ is strictly multiplicative. 
For any pair of sets of weak equivalences $v\subset w$ in $\cC$ such that $w$ 
satisfies the extension axiom, 
the inclusion functor $\cC^w\rinc \cC$ and the identity functor of $\cC$ 
induce the sequence 
\begin{equation}
\label{equ:fibration}
vS_{\cdot}\cC^w \to vS_{\cdot}\cC \to wS_{\cdot}\cC.
\end{equation}
The main objective of this section, we will give a sufficient condition 
that the sequence $\mathrm{(\ref{equ:fibration})}$ is a fibration up to homotopy. 
We start by looking into the proof of the generic fibration theorem in \cite{Wal85}. 

\begin{para}
\label{prop:char of fibration thm}
{\bf Proposition.} 
{\it
Let $v\subset w$ be sets of weak equivalences in $\cC$. 
Assume that $w$ is extensional. 
Then the sequence $\mathrm{(\ref{equ:fibration})}$ 
is a fibration up to homotopy 
if and only if the inclusion functor 
$\bar{w}S_{\cdot}\cC(-,v)\rinc wS_{\cdot}\cC(-,v)$ 
of bisimplicial categories 
is a homotopy equivalence.
}
\end{para}

\begin{proof}
Since $w$ is extensional, we have an isomorphism 
of bisimplicial categories
$$vS_{\cdot}S_{\cdot}(\cC^w\rinc \cC)\isoto\bar{w}S_{\cdot}\cC(-,v).$$ 
(See \cite[p.352]{Wal85}.) 
Let us consider the following commutative diagram: 
$$\xymatrix{
vS_{\cdot}\cC^w \ar[r] \ar@{=}[ddd] & vS_{\cdot}\cC \ar[r] \ar@{=}[ddd] & 
vS_{\cdot}S_{\cdot}(\cC^w\rinc \cC) \ar[d]_{\wr}\\
& & \bar{w}S_{\cdot}\cC(-,v) \ar[d]^{\textbf{I}}\\
& & wS_{\cdot}\cC(-,v)\\
vS_{\cdot}\cC^w \ar[r] & vS_{\cdot}\cC \ar[r] & wS_{\cdot}\cC \ar[u]_{\textbf{II}}.
}$$
Here the top line is a fibration sequence, up to homotopy and the map 
$\textbf{II}$ is a homotopy equivalence by \cite[1.5.7., 1.6.5.]{Wal85}. 
Hence the bottom line is a fibration sequence up to homotopy if and only if 
the map $\textbf{I}$ is a homotopy equivalence by Lemma~\ref{lem:fib seq} below.
\end{proof}

\begin{para}
\label{lem:fib seq}
{\bf Lemma.} 
{\it
Let 
\begin{equation}
\label{eq:fib diagram}
\xymatrix{
x \ar[r] \ar[d]_a & y \ar[r] \ar[d]_b & z \ar[d]^c\\
x' \ar[r] & y' \ar[r] & z'
}
\end{equation}
be a map of fibration sequences of topological spaces such that 
$z$ and $z'$ are connected. 
If $a$ and $b$ are weak equivalences, 
then $c$ is also a weak equivalence.
}
\end{para}

\begin{proof}
The diagram $\mathrm{(\ref{eq:fib diagram})}$ above induces 
a map of distinguished triangles in the stable category $\Sp$ of spectra. 
Then $a$ and $b$ induces isomorphisms in $\Sp$ and 
thus $c$ is also an isomorphism in $\Sp$ by the five lemma for 
distinguished triangles. 
Since $z$ and $z'$ are connected, $c$ is a weak equivalence.
\end{proof}

\begin{para}
\label{thm:fib thm}
{\bf Theorem. (Fibration theorem).} 
{\it
Let $v\subset w$ be sets of weak equivalences in $\cC$ such that 
$w$ is saturated, extensional and 
right localizing in $\cC$ and 
right permutative with respect to $v$ and 
$\bar{w}$ is right cofinal in $w$ and right permutative with respect to 
both $v$ and $\Cof\cC$ and $v\circ \bar{w}\subset \bar{w}\circ v$ and 
assume that all cofibrations in $\cC$ are monomorphisms. 
Then the sequence $\mathrm{(\ref{equ:fibration})}$ is a fibration up to homotopy.
}
\end{para}

\begin{para}
\label{rem:i_C}
{\bf Remark.} 
Let $w$ be the set of weak equivalences $w$ in $\cC$ 
and $v=i_{\cC}$ the set of all isomorphisms in $\cC$. 
Then we have $v\circ \bar{w}\subset \bar{w}\circ v$. 
Namley $v=i_{\cC}$ always satisfies the assumption in Theorem~\ref{thm:fib thm}. 
\end{para}

\begin{para}
\label{rem:not cover}
{\bf Remark.} 
Theorem~\ref{thm:fib thm} does not cover the original fibration theorem 
in \cite{Wal85}. 
For example let $\cC$ be the abelian category of bounded complexes 
over abelian groups and $w$ is the class of all quasi-isomorphisms in $\cC$. 
Then Waldhausen category $(\cC,w)$ does satisfy Waldhausen's conditions. 
But $\bar{w}$ is not right cofinal in $w$ as follows. 
Let $y=\bbZ/2$ concentrated in degree $0$, $x$ a projective 
resolution of $y$, 
say $2\colon \bbZ\to\bbZ$ and $x\to y$ the natural projection. 
The object $y$ has the property that any morphism $z \to y$ in $\bar{w}$ 
must be an isomorphism. 
So $y$ should split off $x$, and this cannot happen.
\end{para}

\begin{para}
\label{ex:exact fib}
{\bf Example.} 
Let $\cE$ be a small idempotent complete exact category, 
$\cA$ a right $s$-filtering subcategory of $\cE$ 
and $w$ a set of all weak isomorphisms associated to $\cA$ in $\cE$ 
in the sense of \cite{Sch04}. 
Then the sets $w$ and $v=i_{\cE}$ satisfy assuptions in Theorem~\ref{thm:fib thm} 
by \cite{Sch04}. 
Therefore we have the fibration sequence 
$$K(\cA)\to K(\cE)\to K(\cE;w).$$
\end{para}

\begin{proof}[Proof of Theorem~\ref{thm:fib thm}]
By proposition~\ref{prop:char of fibration thm}, 
we shall prove that 
the inclusion functor 
$\bar{w}S_{\cdot}\cC(-,v)\to wS_{\cdot}\cC(-,v)$ is a homotopy equivalence. 
To prove $\bar{w}S_{\cdot}\cC(-,v)\to wS_{\cdot}\cC(-,v)$ 
is a homotopy equivalence, 
the realization lemma in \cite[Appendix A]{Seg74} or \cite[5.1]{Wal78} 
reduces the problem 
to prove the inclusion functor 
$$\bar{w}S_{m}\cC(n,v)\to wS_{m}\cC(n,v)\ \ \ \ \ \ \ \mathrm{(\ast)}_{n,m}$$
is a homotopy equivalence for any non-negative integers $n$ and $m$. 
To prove the functor $ $ is a homotopy equivalence, 
we will utilzize Lemma~\ref{lem:fp com} $\mathrm{(4)}$ for $\cC=S_m\cC(n,v)$, 
$\cS=w$ and $\cT=\bar{w}$. 
Let $n$ be a non-negative integer 
and let $m$ be a positive integer. 
We set $\cB:=\cC(n,v)$ and $\cA:=\cB(m-1,\Cof\cB)$. 
Then since the forgetful functor gives 
an equivalence of categories with cofibrations 
$$S_m\cB\isoto\cA,$$ 
to check assumptions in Lemma~\ref{lem:fp com} $\mathrm{(4)}$, 
we shall show that $w\cA$ is saturated and right localizing in $\cA$ 
and $\bar{w}\cA$ is right cofinal in $w\cA$ by Lemma~\ref{lem:pullback}. 
We enumerate assumptions in Theorem~\ref{thm:fib thm}.\\
$\mathrm{(A)}$ 
$w$ is saturated and extensional.\\
$\mathrm{(B)}$ 
$w$ is right permutative with respect to $\Mor\cC$.\\
$\mathrm{(C)}$ 
$w$ is right permutative with respect to $v$.\\
$\mathrm{(D)}$ 
$w$ is right reversible with respect to $\Mor\cC$.\\
$\mathrm{(E)}$ 
$\bar{w}$ is right cofinal in $w$.\\
$\mathrm{(F)}$ 
$\bar{w}$ is right permutative with respect to $\Cof\cC$.\\
$\mathrm{(G)}$ 
$\bar{w}$ is right permutative with respect to $v$.\\
$\mathrm{(H)}$ 
$v\circ\bar{w}\subset \bar{w}\circ v$.\\
$\mathrm{(I)}$ 
All cofibrations in $\cC$ are monomorphims.

\begin{claim}
We have the following:\\
$\mathrm{(1)}$ 
$w\cA$ is extensional and saturated in $\cA$.\\
$\mathrm{(2)}$ 
$\bar{w}$ is right permutative with respect to $\Mor\cC$.\\
$\mathrm{(3)}$ 
$\bar{w}\cB$ is right cofinal in $w\cB$.\\
$\mathrm{(4)}$ 
$\bar{w}\cB$ is right permutative with respect to $\Mor\cB$.\\
$\mathrm{(5)}$ 
$\bar{w}\cB$ is right permutative with respect to $\Cof\cB$.\\
$\mathrm{(6)}$ 
$w\cB$ is right permutative with respect to $\Cof\cB$.\\
$\mathrm{(7)}$ 
$w\cB$ is right reversible with respect to $\Mor\cB$.\\
$\mathrm{(8)}$ 
$\bar{w}\cA$ is right cofinal in $w\cA$.\\
$\mathrm{(9)}$ 
$\bar{w}\cA$ is right permutative with respect to $\Mor\cA$.\\
$\mathrm{(10)}$ 
$w\cA$ is right permutative with respect to $\Mor\cA$.\\
$\mathrm{(11)}$  
$w\cA$ is right reversible with respect to $\Mor\cA$.
\end{claim}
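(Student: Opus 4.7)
The plan is to establish the eleven sub-items in the order listed by two rounds of lifting the hypotheses (A)--(I) through the formal machinery of Section~\ref{sec:Class mor}: first promoting the relevant properties of $w$ and $\bar w$ in $\cC$ to their analogues in $\cB = \cC(n,v)$ via Lemma~\ref{lem:cC(-;U)} (items (3)--(7)), then lifting once more to $\cA = \cB(m-1,\Cof\cB)$ by a second application of the same lemma (items (8)--(11)). Items (1) and (2) are treated directly: (1) follows componentwise from (A) since an extension in $\cA$ is a levelwise extension in $\cC$, and the 2-out-of-3 property is evidently preserved; (2) is a direct application of Lemma~\ref{lem:fp com} $\mathrm{(1)}$ with $(\cS,\cT,\cU)=(w,\bar w,\Mor\cC)$ using (B), (E), and the triviality $\Mor\cC\circ\bar w\subset\Mor\cC$.

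For the first lifting $\cC\to\cB$, item (3) is Lemma~\ref{lem:cC(-;U)} $\mathrm{(1)}$ with $(\cS,\cT,\cU)=(w,\bar w,v)$, verified by (E), (C), (H). Items (4) and (5) are both Lemma~\ref{lem:cC(-;U)} $\mathrm{(2)}$ with $\cS=\bar w$, $\cU=v$, taking $\cT=\Mor\cC$ and $\cT=\Cof\cC$ respectively; the monomorphism hypothesis holds by (I) since $\bar w\subset\Cof\cC$, the permutativities follow from item (2) together with (F), (G), and the commutation is (H). Item (6) applies Lemma~\ref{lem:fp com} $\mathrm{(3)}$ internally to $\cB$ with $\cT=\bar w\cB$, $\cS=w\cB$, $\cU=\Cof\cB$, invoking items (3), (5), and the evident inclusion $\bar w\cB\circ\Cof\cB\subset\Cof\cB$. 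Finally, item (7) is Lemma~\ref{lem:cC(-;U)} $\mathrm{(3)}$ with $(\cS,\cT,\cU,\cV)=(w,\Mor\cC,v,\bar w)$, using (C), (D), (E), (H).

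The second lifting $\cB\to\cA$ mirrors the first with the triple $(w,\bar w,v)$ replaced by $(w\cB,\bar w\cB,\Cof\cB)$. The sole new input required is the commutation $\Cof\cB\circ\bar w\cB\subset\bar w\cB\circ\Cof\cB$, which is trivial by post-composing with an identity: for $g\in\bar w\cB$ and $f\in\Cof\cB$ with $fg$ defined, write $fg=\id\circ(fg)$, noting that $fg\in\Cof\cB$ since $\bar w\cB\subset\Cof\cB$ and $\Cof\cB$ is closed under composition. With this in hand, item (8) uses Lemma~\ref{lem:cC(-;U)} $\mathrm{(1)}$ from (3) and (6); item (9) uses Lemma~\ref{lem:cC(-;U)} $\mathrm{(2)}$ from (4), (5), and the fact that $\bar w\cB$-morphisms are componentwise monomorphisms by (I); item (10) uses Lemma~\ref{lem:fp com} $\mathrm{(3)}$ from (8) and (9); and item (11) uses Lemma~\ref{lem:cC(-;U)} $\mathrm{(3)}$ from (3), (6), (7). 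The hard part is purely bookkeeping --- correctly identifying the role played by each class of morphisms in the abstract lemmas and verifying the side conditions --- since all the substantive homotopical content has already been packaged in Section~\ref{sec:Class mor}.
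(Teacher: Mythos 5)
Your proof is correct and follows essentially the same route as the paper: each item is obtained by the same application of Lemma~\ref{lem:fp com} or Lemma~\ref{lem:cC(-;U)} with the same parameter choices, first lifting from $\cC$ to $\cB$ and then from $\cB$ to $\cA$. The only (harmless) variation is in item (7), where you take $\cV=\bar w$ using (E) and (H) while the paper takes $\cV=w$ using the trivial cofinality of $w$ in itself; and in a few places (notably items (3), (4), (8), (9)) you explicitly supply hypotheses such as (C), (G), claim (3), and claim (5) that the paper's proof invokes implicitly or lists with minor typographical slips, so your bookkeeping is if anything more complete.
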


\begin{proof}[Proof of claim]
Assertion $\mathrm{(1)}$ follows from $\mathrm{(A)}$ as in \cite{Wal85}.

\sn
$\mathrm{(2)}$ 
We apply Lemma~\ref{lem:fp com} $\mathrm{(1)}$ to 
$\cS=w$, $\cT=\bar{w}$ and $\cU=\Mor\cC$. 
Asumptions in Lemma~\ref{lem:fp com} $\mathrm{(1)}$ follow from 
$\mathrm{(B)}$, $\mathrm{(E)}$ and $\Mor\cC\circ \bar{w}\subset \Mor\cC$.

\sn
$\mathrm{(3)}$ 
We apply Lemma~\ref{lem:cC(-;U)} $\mathrm{(1)}$ to 
$\cS=w$, $\cT=\bar{w}$ and $\cU=v$. 
Assumptions in Lemma~\ref{lem:cC(-;U)} $\mathrm{(1)}$ follows from 
assumptions $\mathrm{(E)}$ and $\mathrm{(H)}$. 
Hence we get the result.

\sn
$\mathrm{(4)}$ 
We apply Lemma~\ref{lem:cC(-;U)} $\mathrm{(2)}$ to 
$\cS=\bar{w}$, $\cT=\Mor\cC$ and $\cU=v$. 
Assumptions in Lemma~\ref{lem:cC(-;U)} $\mathrm{(2)}$ follow from 
previously proved claim $\mathrm{(2)}$ and 
assumptions $\mathrm{(C)}$, $\mathrm{(H)}$ and $\mathrm{(I)}$ and 
$\Mor\cC\circ \bar{w}\subset \Mor\cC$. 
Hence we get the result. 

\sn
$\mathrm{(5)}$ 
We apply Lemma~\ref{lem:cC(-;U)} $\mathrm{(2)}$ to 
$\cS=\bar{w}$, $\cT=\Cof\cC$ and $\cU=v$. 
Assumptions in Lemma~\ref{lem:cC(-;U)} $\mathrm{(2)}$ follow from 
assumptions $\mathrm{(F)}$, $\mathrm{(G)}$, $\mathrm{(H)}$, $\mathrm{(I)}$ and 
$\Cof\cC\circ \bar{w}\subset \Cof\cC$. 
Hence we get the result. 

\sn
$\mathrm{(6)}$ 
We apply Lemma~\ref{lem:fp com} $\mathrm{(3)}$ to 
$\cS=w\cB$, $\cT=\bar{w}\cB$ and $\cU=\Cof\cB$. 
Asumptions in Lemma~\ref{lem:fp com} $\mathrm{(3)}$ follow from 
previously proved claims 
$\mathrm{(3)}$ and $\mathrm{(5)}$ and $\bar{w}\cB \circ \Cof\cB\subset \Cof\cB$.
Hence we obtain the result.

\sn
$\mathrm{(7)}$ 
We apply Lemma~\ref{lem:cC(-;U)} $\mathrm{(3)}$ to 
$\cS=\cV=w$, $\cT=\Mor\cC$ and $\cU=v$. 
Assumptions in Lemma~\ref{lem:cC(-;U)} $\mathrm{(3)}$ follow from 
assumptions $\mathrm{(A)}$, $\mathrm{(C)}$, $\mathrm{(D)}$ and 
$v\circ w\subset w\circ v$. 
The last condition follows from assumption $v\subset w$. 
Hence we get the result. 

\sn
$\mathrm{(8)}$ 
We apply Lemma~\ref{lem:cC(-;U)} $\mathrm{(1)}$ to 
$\cS=w\cB$, $\cT=\bar{w}\cB$ and $\cU=\Cof\cB$. 
Assumptions in Lemma~\ref{lem:cC(-;U)} $\mathrm{(1)}$ follow from 
previously proved claim 
$\mathrm{(6)}$ and $\bar{w}\cB \circ \Cof\cB\subset \bar{w}\cB\circ\Cof\cB$. 
The last condition follows from 
$\bar{w}\cB \circ \Cof\cB\subset \Cof\cB$.

\sn
$\mathrm{(9)}$ 
We apply Lemma~\ref{lem:cC(-;U)} $\mathrm{(2)}$ to 
$\cS=\bar{w}\cB$, $\cT=\Mor\cB$ and $\cU=\Cof\cB$. 
Assumptions in Lemma~\ref{lem:cC(-;U)} $\mathrm{(2)}$ follow from 
previosuly proved claim $\mathrm{(4)}$ and 
$\Cof\cC\circ \bar{w}\cB \subset \bar{w}\cB\circ\Cof\cC$ 
and $\Mor\cB\circ\bar{w}\cB\subset\Mor\cB$.

\sn
$\mathrm{(10)}$ 
We apply Lemma~\ref{lem:fp com} $\mathrm{(3)}$ to 
$\cS=w\cA$, $\cT=\bar{w}\cA$ and $\cU=\Mor\cA$. 
Assumptions in Lemma~\ref{lem:fp com} $\mathrm{(3)}$ follow from 
previously proved claims $\mathrm{(8)}$ and $\mathrm{(9)}$ and 
$\bar{w}\cA\circ \Mor\cA\subset \Mor\cA$.

\sn
$\mathrm{(11)}$ 
We apply Lemma~\ref{lem:cC(-;U)} $\mathrm{(3)}$ to 
$\cS=w\cB$, $\cT=\Mor\cB$ and $\cU=\Cof\cB$ and $\cV=\bar{w}\cB$. 
Assumptions in Lemma~\ref{lem:cC(-;U)} $\mathrm{(2)}$ follow from 
previously proved claims $\mathrm{(3)}$, $\mathrm{(6)}$ and $\mathrm{(7)}$ and 
the facts that 
$\Cof\cB\circ \bar{w}\cB\subset \bar{w}\cB\circ\Cof\cB$ and 
$\bar{w}\cB$ and $w\cB$ are multiplicative sets in $\cB$. 
\end{proof}

Hence the inclusion functor between bisimplicial categories 
$$\bar{w}S_{\cdot}\cC(-,v)\rinc wS_{\cdot}\cC(-,v)$$
is a homotopy equivalence. 
Then the sequence $\mathrm{(\ref{equ:fibration})}$ 
is a fibration sequence up to homotopy 
by Proposition~\ref{prop:char of fibration thm}. 
\end{proof}

\section{Localization of categories}
\label{sec:loc of cat}

In this section, 
we construct fibration sequences of $K$-theory of 
categories with cofibrations relating with localizations of 
categories. 
(See Corollary~\ref{cor:localization sequence}.) 
A key ingredient of the construction is 
a homotopy invariance of taking Gabriel-Zisman localization. 
(See Lemma~\ref{lem:localization of categories}.) 
By utilizing the same method, 
in the last of this section, 
we will prove a version of approximation theorem for Waldhausen $K$-theory. 
(See Proposition~\ref{prop:abstract approximation}.) 
We start by recalling the notations in localization theory of categories from 
\cite{GZ67}.

\begin{para}
\label{dfthm:category of fractions}
{\bf Definition-Theorem.} 
(\cf \cite[Chapter I]{GZ67}.)
Let $\cC$ be a small category and 
$\cS$ a right localizing set in $\cC$. 
Then\\
$\mathrm{(1)}$ 
We define $\cS^{-1}\cC$ to be a category by 
setting $\Ob\cS^{-1}\cC :=\Ob\cC$ and 
$$\displaystyle{\Hom_{\cS^{-1}\cC}(x,y):=
\underset{z\to x\in\cS}{\colim} \Hom(z,y)}$$
for any objects $x$ and $y$ in $\cC$. 
A morphism from an object $x$ to an object $y$ in 
$\cS^{-1}\cC$ is represented by a sequence of morphisms 
$x \overset{s}{\leftarrow} z \onto{f} y$ in $\cC$ with $s\in \cS$. 
We denote it by $f/s$. 
We also define 
$Q_{\cS}\colon\cC\to\cS^{-1}\cC$ to be a functor 
by sending an object $x$ to $x$ and a morphism $x\onto{f} y$ to 
the class $x \overset{\id_x}{\leftarrow} x \onto{f} y$.\\
$\mathrm{(2)}$ 
The pair $(\cS^{-1}\cC,Q_{\cS})$ satsfies the following universal property:\\
For any category $\cX$, the functor 
$$\cX^{Q_{\cS}}\colon\cX^{\cS^{-1}\cC}\to\cX^{\cC}$$
is an isomorphism from $\cX^{\cS^{-1}\cC}$ onto 
the full subcategory of $\cX^{\cC}$ 
whose objects are the functor 
$f\colon\cC\to\cX$ 
which makes all the morphisms of $\cS$ invertible. 
We say that $\cS^{-1}\cC$ is the {\it category of fractions of $\cC$ for $\cS$}, 
and $Q_{\cS}$ is the {\it canonical functor}.
\qed
\end{para}

\begin{para}
\label{nt:C^times}
For a category $\cC$, we write 
$\cC^{\times}$ for the maximum groupoid of $\cC$, 
Namely $\cC^{\times}$ is the category whose class of objects are 
same as $\cC$ and whose class of morphisms are all isomorphisms in $\cC$. 
\end{para}

\begin{para}
\label{lem:localization of categories}
{\bf Lemma.} 
{\it
Let $\cC$ be a small category and $w$ is a right localizing set in $\cC$. 
Then\\
$\mathrm{(1)}$ 
The canonical functor $Q_w\colon \cC \to w^{-1}\cC$ is a homotopy equivalence.\\
$\mathrm{(2)}$ 
Moreover assume that $w$ is saturated in $\cC$, 
then $Q_w$ induces an equivalence of categories 
$w^{-1}w\isoto {(w^{-1}\cC)}^{\times}$. 
}
\end{para}

\begin{proof}
$\mathrm{(1)}$ 
We will apply Quillen's Theorem A in \cite{Qui73} 
to the functor $Q_w$. 
Let $x$ be an object in $w^{-1}\cC$. 
We write $x/Q_w$ for the category consisting of pairs 
$(y,\alpha)$ with $\alpha\colon x\to y$ morphisms in $w^{-1}\cC$, 
in which a morphism from $(y,\alpha) $ to $(y',\alpha')$ is 
a morphism $u\colon y\to y'$ such that $Q_w(u)\alpha=\alpha'$. 
We will show that $x/Q_w$ is cofiltered. 
Namely we will check the following three conditions:

\sn
$\mathrm{(a)}$ 
$x/Q_w$ is non-empty.

\sn
$\mathrm{(b)}$ 
For every two objects $(y,\alpha) $ and $(y',\alpha')$ in $x/Q_w$, 
there exists an object $(y'',\alpha'')$ and two morphisms 
$u\colon(y'',\alpha'')\to (y,\alpha)$ and $v\colon(y'',\alpha'')\to (y,\alpha)$.

\sn
$\mathrm{(c)}$ 
For every two parallel morphisms 
$u,\ v\colon(y,\alpha)\to (y',\alpha')$ in $x/Q_w$, 
there exists an object $(y'',\alpha'')$ and a morphism 
$j\colon(y'',\alpha'')\to(y,\alpha)$ such that $uj=vj$.

Since $(x,\id_x/\id_x)$ is in $x/Q_w$, 
$x/Q_w$ is non-empty. 
Let $(y,\alpha/s)$ and $(y',\alpha'/s')$ with 
$x\ \overset{s}{\leftarrow} z \onto{\alpha} y$ and 
$x' \overset{s'}{\leftarrow} z' \onto{\alpha'} y'$ 
be a pair of objects in $x/Q_w$. 
We will show condition $\mathrm{(b)}$. 
By right permutative condition, 
there exists morphisms $t'\colon z''\to z$ and $t\colon z''\to z'$ 
such that $s't=st'$. 
Then we can check that $(z'',st'/\id_{z''}) $ is in $x/Q_w$ and 
there exists morphisms 
$(y',\alpha'/s') 
\overset{\alpha' t}{\leftarrow} 
(z'',st'/\id_{z''}) 
\onto{\alpha t'} (y,\alpha/s)$. 
Next we will show condition $\mathrm{(c)}$. 
For any two parallel morphisms 
$u,\ v\colon(y,\alpha/s)\to (y',\alpha'/s')$, 
by using right permutative condition, we can find 
morphisms $t\colon z''\to z$ and $t'\colon z''\to z'$ 
such that $st=s't'$, $u\alpha t=\alpha't'=v\alpha t$. 
Hence there exists a morphism 
$\alpha t\colon(z'',st/\id_{z''})\to (y,\alpha/s)$ 
such that $u\alpha t=v\alpha t$. 
Therefore $x/Q_w$ is contractible by Corollary~2 (of Proposition~3) 
in \cite[\S 1]{Qui73}. 
Thus by Theorem~A in \cite{Qui73} again, 
$Q_w$ is a homotopy equivalence.

\sn
$\mathrm{(2)}$ 
Obviously the functor $w^{-1}w\to {\left (w^{-1}\cC \right )}^{\times}$ 
is essentially surjective, 
what we need to show is that the functor is fully faithful. 
Let $x$ and $y$ be a pair of objects in $\cC$ 
and $x \overset{s}{\leftarrow} z \onto{f} y$ 
a morphism in $w^{-1}\cC$. 
We will show that $f/s$ is an isomorphism 
if and only if $f$ is in $w$. 
If $f$ is in $w$, 
we can check that $s/f$ is the inverse morphism of $f/s$. 
If $f/s$ is an isomorphism, 
then there exists the inverse morphism 
$y \overset{t}{\leftarrow} z' \onto{g} x$ of $f/s$. 
Since $f/s \cdot g/t=\id_x$, 
there exists morphisms $s'\colon z''\to z'$ in $w$ and 
$g'\colon z''\to z$ such that $sg'=gs'$ and $ts'=fg'$. 
By saturation condition, it turns out that $f$ is in $w$. 
Thus the functor $w^{-1}w\to {\left (w^{-1}\cC \right )}^{\times} $ is full. 
Next let 
$x\overset{s}{\leftarrow} z \onto{f} y$ and 
$x'\overset{s'}{\leftarrow} z' \onto{f'} y'$ 
be a pair of morphisms in $w^{-1}w$. 
If the equality $ $ holds in $w^{-1}\cC$. 
Then there exists morphisms $t\colon z''\to z$ and $t'\colon z''\to z'$ 
such that 
we have equalities 
$s't'=st$ and $f't'=ft$ and $st$ is in $w$. 
Therefore by saturation condition, $t$ and $t'$ are also in $w$ 
and the equality $f/s=f'/s'$ holds in $w^{-1}w$. 
Thus the functor $w^{-1}w\to {\left (w^{-1}\cC \right )}^{\times}$ 
is faithful. 
Hence we complete the proof.
\end{proof}

\begin{para}
\label{cor:comparison of K(C;w) and K(w^-1C)}
{\bf Corollary.} 
{\it
Let $(\cC,w)$ be a small Waldhausen category. 
Assume that $w$ is a saturated, right localizing set and 
$w^{-1}\cC$ is also a category with cofibrations such that the canonical 
functor $Q_w\colon \cC \to w^{-1}\cC$ is exact and reflects exactness 
and for any non-negative integer $n$, $Q_w$ induces an equivalence of 
categories $w^{-1}S_n\cC\isoto S_nw^{-1}\cC$. 
Then $Q_w$ induces a homotopy equivalence $wS_{\cdot}\cC\to iS_{\cdot}w^{-1}\cC$. }
\end{para}

\begin{proof}
For any non-negative integer $n$, 
let us consider the commutative diagram of the canonical functors 
$$
\xymatrix{
wS_n\cC \ar[dr]_{\text{I}} \ar[rr] & & iS_nw^{-1}\cC\\
& {(w^{-1}S_n\cC)}^{\times}.\ar[ru]_{\text{II}} &\\
}
$$
Here the functor $\textbf{I}$ is a homotopy equivalence and 
the functor $\textbf{II}$ is an equivalence of categories 
by the previous lemma~\ref{lem:localization of categories}. 
Hence the canonical functor induced from $Q_w$ is a homotopy equivalence 
$wS_{\cdot}\cC\to iS_{\cdot}w^{-1}\cC $ 
by the realization lemma in \cite[Appendix A]{Seg74} or \cite[5.1]{Wal78}.
\end{proof}

\begin{para}
\label{cor:localization sequence}
{\bf Corollary.} 
{\it
Let $(\cC,w)$ be a small Waldhausen category. 
We set $\bar{w}:=w\cap \Cof\cC$. 
We assume the following conditions:\\
$\mathrm{(i)}$ 
$w$ is saturated.\\
$\mathrm{(ii)}$ 
$w\cS_n\cC$ is right localizing in $\cS_n\cC$ for any 
non-negative integer $n$.\\
$\mathrm{(iii)}$ 
$w^{-1}\cC$ is also a category with cofibrations such that the 
canonical functor 
$Q_w\colon \cC\to w^{-1}\cC$ 
is exact and reflects exactness.\\
$\mathrm{(iv)}$ 
For any non-negative integer $n$, 
$Q_w$ induces an equivalence of categories 
$w^{-1}\cS_n\cC\isoto\cS_nw^{-1}\cC$.\\
$\mathrm{(v)}$ 
$\bar{w}$ is right cofinal in $w$.\\
$\mathrm{(vi)}$ 
$\bar{w}$ is right permutative with respect to $\Cof\cC$.\\
$\mathrm{(vii)}$ 
All cofibrations in $\cC$ are monomorphisms.\\
Then the inclusion functor 
$\cC^w\rinc \cC$ and the canonical localization functor 
$Q_w\colon\cC \to w^{-1}\cC$ induces a fibration sequence up to homotopy
$$i\cS_{\cdot}\cC^w\to i\cS_{\cdot}\cC\to i\cS_{\cdot}w^{-1}\cC.$$
}
\end{para}

\begin{proof}
Let us consider the commutative diagram below:
$$
\xymatrix{
i\cS_{\cdot}\cC^w\ar[r]  & 
i\cS_{\cdot}\cC \ar[r] \ar[dr]_{i\cS_{\cdot}Q_w} & w\cS_{\cdot}\cC 
\ar[d]^{\textbf{I}}\\
& & i\cS_{\cdot}w^{-1}\cC.
}$$
Then the top line is a fibration sequence up to 
homotopy by 
Theorem~\ref{thm:fib thm} 
and the map $\textbf{I}$ 
is a homotpy equivalence by 
Corollary~\ref{cor:comparison of K(C;w) and K(w^-1C)}. 
Hence we obtain the result.
\end{proof}

By using the similar method as in the proof of 
Corollary~\ref{cor:comparison of K(C;w) and K(w^-1C)}, 
we prove a version of an approximation theorem for Waldhausen $K$-theory 
in the following way.

\begin{para}
\label{prop:abstract approximation}
{\bf Proposition (Approximation theorem).} 
{\it
Let $f\colon (\calD,v)\to (\cC,w)$ be an exact functor 
between Waldhausen categories. 
Suppose that for any non-negative integer $n$, 
$wS_n\cC$ and $vS_n\calD$ are right localizing in 
$S_n\cC$ and $S_n\calD$ respectively and 
$f$ induces an equivalence of categories 
$v^{-1}S_n\calD\isoto w^{-1}S_n\cC$. 
Then $f$ induces a homotopy equivalence 
$vS_{\cdot}\calD \to wS_{\cdot}\cC$.
}
\end{para}

\begin{proof}
The realization lemma in \cite[Appendix A]{Seg74} or \cite[5.1]{Wal78} reduces the problem to prove that 
for any non-negative integer $n$, 
$vS_n\calD \to wS_n\cC$ the induced morphism by $f$ is a homotopy equivalence. 
We fix a non-negative integer $n$ and let us consider the commutative 
diagram below.
$$
\xymatrix{
vS_n\calD \ar[r] \ar[d] & wS_n\cC \ar[d]\\
{\left (v^{-1} S_n\calD \right )}^{\times} \ar[r]_{\sim} & 
{\left (w^{-1}S_n\cC \right )}^{\times}. 
}
$$
Here the bottom line is an equivalence of categories, a fortiori, 
a homotopy equivalence and the vertical morphisms are homotopy equivalences 
by Lemma~\ref{lem:localization of categories}. 
Hence the inclusion functor in 
$vS_n\calD \to wS_n\cC$ is a homotopy equivalence. 
We complete the proof. 
\end{proof}

\section{Localization of diagrams of modules}
\label{sec:Loc diag of mod}

Let $A$ be a noetherian commutative ring with $1$ and 
$\fS$ a multiplicative closed set of $A$. 
Let $\cM_A$ be the category of 
finitely generated $A$-modules. 
We denote a skelton of $\cM_A$ by the same letters $\cM_A$ 
and we shall assume that $\cM_A$ is a small category. 
In this subsection, we will study the category of 
diagrams in $\cM_{\fS^{-1}A}$.

\begin{para}
\label{para:diagram category}
Recall from Conventions that 
for a small category $\cI$ and a category $\cX$, 
we write $\cX^{\cI}$ for the category of functors 
from $\cI$ to $\cX$ whose morphisms are natural transformations. 
We sometimes call a functor $\cI \to \cX$ an {\it $\cI$-diagram in $\cX$}. 
For a small category $\cI$, the category $\cM_A^{\cI}$ is enriched 
over the category of $A$-modules. Namely 
for any objects $x$ and $y$ in $\cM_A^{\cI}$, 
$\Hom_{\cM_A^{\cI}}(x,y)$ naturally becomes $A$-modules. 
Indeed, for any $f$, $g\colon x\to y$ in 
$\Hom_{\cM_A^{\cI}}(x,y)$ and $a$ in $A$, 
we set 
$${(f+g)}_i(t):=f_i(t)+g_i(t),$$
$${(af)}_i(t):=af_i(t)$$
for any object $i$ of $\cI$ and any element $t$ of $x$. 
Moreover the $A$-modules structures on $\Hom$-sets are compatible with 
the composition of morphisms. 
$\cM_A^{\cI}$ is also an abelian category. 
\end{para}

\begin{para}
\label{para:exact localization functor}
Let $\cI$ be a small category. 
The base change functor 
$-\otimes_A \fS^{-1}A\colon \cM_A \to \cM_{\fS^{-1}A}$ 
which sends an $A$-module $M$ to $\fS^{-1}M$ 
induces an exact functor 
$$\calL_{\cI,A,\fS}\colon \cM_A^{\cI}\to \cM_{\fS^{-1}A}^{\cI}.$$ 
For simplicity, we sometimes write $\calL$ for $\calL_{\cI,A,\fS}$. 
\end{para}

In the rest of this subsection, 
let $\{\cI_i\}_{1\leq i\leq r}$ be a family of finite 
totally ordered sets. 
We set $\displaystyle{\cJ:=\prod_{i=1}^r\cI_i}$.

\begin{para}
\label{prop:Hom set of localized cat}
{\bf Proposition.} 
{\it
For any objects $x$ and $y$ in $\cM_A^{\cJ}$, 
there is a canonical isomorphisms of $\fS^{-1}A$-modules
\begin{equation}
\label{eq:local hom}
\Hom_{\cM_{\fS^{-1}A}^{\cJ}}(\calL(x),\calL(y))\isoto \fS^{-1}\Hom_{\cM_A^{\cJ}}(x,y).
\end{equation}
}
\end{para}

\begin{proof}
We proceed by induction on $r$. 
If $\cJ$ is $[0]$ the totally ordered set $\{0\}$, 
then $\cM_A^{\cJ}$ and $\cM_{\fS^{-1}A}^{\cJ}$ are 
$\cM_A$ and $\cM_{\fS^{-1}A}$ respectively and 
assertion follows from Proposition~19 in \cite[Chapter II \S 2.7]{Bou61}. 
We assume that assertion is true for $r=r'$. 
We will prove assertion for $r=r'+1$. 
We set $\cC:=\cM_A^{\prod_{i=1}^{r'}\cI_i}$ and 
$\cC':=\cM_{\fS^{-1}A}^{\prod_{i=1}^{r'}\cI_i}$. 
Without loss of generality, we shall assume that 
$\cI_{r'+1}$ is $[n]$ the finite totally ordered set 
of non-negative integres less than $n$ 
with the usual ordering for some positive integer $n$. 

Then there is a 
description of $\Hom$-sets of $\cC^{[n]}$ in terms of 
suitable kernels of finite direct sum of $\Hom$-sets in $\cC$ as follows. 
Let $x$ and $y$ be objects in $\cC^{[n]}$ and let $i$ 
be a non-negative integer less than $n-1$. 
We define 
$D_{x,y,i}\colon\Hom_{\cC}(x_i,y_i)\times\Hom_{\cC}(x_{i+1},y_{i+1})\to 
\Hom_{\cC}(x_i,y_{i+1})$ 
to be a homomorphism of $A$-modules 
by sending a pair $(a\colon x_i\to y_i,b\colon x_{i+1}\to y_{i+1})$ to 
$b x(i\leq i+1)-y(i\leq i+1)a$. 
Then the family $\{D_{x,y,i}\}_{1\leq i\leq n-1}$ induces a 
homomorphism of $A$-modules
$$D_{x,y}\colon\bigoplus_{i\in[n]}\Hom_{\cC}(x_i,y_i)\to 
\bigoplus_{i\in[n-1]} \Hom_{\cC}(x_i,y_{i+1})$$
and there is a canonical isomorphism of $A$-modules
\begin{equation}
\label{eq:Ker description}
\Hom_{\cC^{[n]}}(x,y)\isoto \Ker D_{x,y}.
\end{equation}
Since the base change functor 
$-\otimes_A\fS^{-1}A\colon \cM_A\to \cM_{\fS^{-1}A}$ 
is exact, 
there are isomorphisms of $\fS^{-1}A$-modules 
\begin{eqnarray*}
\fS^{-1}\Hom_{\cC^{[n]}}(x,y) & \isoto & 
\Ker \left ( \bigoplus_{i\in[n]}\fS^{-1}\Hom_{\cC}(x_i,y_i)\to 
\bigoplus_{i\in[n-1]} \fS^{-1}\Hom_{\cC}(x_i,y_{i+1}) \right )\\
& \underset{\textbf{I}}{\isoto} & 
\Ker \left ( \bigoplus_{i\in[n]}\Hom_{\cC'}(\calL(x_i),\calL(y_i))\to 
\bigoplus_{i\in[n-1]} \Hom_{\cC'}(\calL(x_i),\calL(y_{i+1})) \right )\\
& \underset{\textbf{II}}{\isoto} & \Ker D_{\calL(x),\calL(y)}\\
& = & \Hom_{\cC'^{[n]}}(\calL(x),\calL(y)).
\end{eqnarray*}
Here the isomorphisms \textbf{I} and \textbf{II} 
come from inductive hypothesis and 
the isomorphism $\mathrm{(\ref{eq:Ker description})}$ 
for $\fS^{-1}A$ respectively. 
\end{proof}

\begin{para}
\label{defcor:localization of diagram}
{\bf Definition-Corollary.} 
{\it
We consider the canonical exact functor 
$\calL_{\ast}\colon\cM_A^{\cJ}\to \cM_{\fS^{-1}A}^{\cJ}$ 
induced from the base change functor 
$-\otimes_A\fS^{-1}A\colon\cM_A\to\cM_{\fS^{-1}A}$. Then\\
$\mathrm{(1)}$ 
For morphisms $f$, $g\colon x\to y$ in $\cM_A^{\cJ}$, 
$\calL_{\ast}(f)=\calL_{\ast}(g)$ 
if and only if there is an element $s$ in $\fS$ such that 
$sf=sg$.\\
$\mathrm{(2)}$ 
Let $\cC$ be a full subcategory of $\cM_A^{\cJ}$. 
We write $w:=\Isom_{\fS,\cC}$ for 
the class of all morphisms $f$ in $\cC$ such that 
$\calL_{\ast}(f)$ are isomorphisms. 
Then $\Isom_{\fS,\cC}$ is a saturated strictly multiplicative set.\\
$\mathrm{(3)}$ 
A morphism $f\colon x\to y$ in $\cC$ is in 
$\Isom_{\fS,\cC}$ if and only if 
there are a morphism $g\colon y\to x$ and elements $s$, $t$ and $u$ in $\fS$ 
such that $fgt=st\id_y$ and $gfu=su\id_x$.\\
$\mathrm{(4)}$ 
Let $f\colon x\to y$ be a morphism in $\Isom_{\fS,\cC}$ and 
$g\colon z\to y$ a morphism in $\cC$. 
Then there are a morphism $h\colon z\to x$ and an element $s$ of $\fS$ 
such that $sg=fh$.\\
$\mathrm{(5)}$ 
$w$ is a right  localizing system in $\cC$.\\
$\mathrm{(6)}$ 
For any pair of objects $x$ and $y$ in $\cC$, 
a morphism from $x$ to $y$ in $w^{-1}\cC$ is 
represented by $f/s\id_x$ where $f$ is a morphism from 
$x$ to $y$ and $s$ is an element of $\fS$.\\
$\mathrm{(7)}$ 
The induced functor $w^{-1}\cC\to\cM_{\fS^{-1}A}^{\cJ}$ from $\calL_{\ast}$ 
is fully faithful.\\
$\mathrm{(8)}$ 
Assume that $\cC$ is a category with cofibrations such that 
the inclusion functor $\cC \rinc \cM_A^{\cJ}$ is exact and reflects 
exactness, then $\Isom_{\fS,\cC} $ satisfies the guling axiom.\\
$\mathrm{(9)}$ 
Moreover assume that $w^{-1}\cC$ is a category with cofibrations 
such that the induced functor 
$w^{-1}\cC\to \cM_{\fS^{-1}A}^{\cJ}$ from $\calL_{\ast}$ 
is exact and reflects exactness. 
Let $n$ be a non-negative integer. 
We regard $F_n\cC$ as a full subcategory of $\cM_A^{[n]\times\cJ}$. 
Then we have the equality $wF_n\cC=\Isom_{\fS,F_n\cC}$.\\
$\mathrm{(10)}$ 
We assume same assumptions as in $\mathrm{(9)}$. 
Then for any non-negative integer the canonical functor 
$w^{-1}S_n\cC \to S_nw^{-1}\cC$ 
induced from the functor $Q_w\colon\cC \to w^{-1}\cC$ 
is fully faithful.\\
$\mathrm{(11)}$ 
Let $n$ be a non-negative integer and assume that 
the condition:

\sn
For any object $x$ in $S_nw^{-1}\cC$, 
there are an object $y$ in $S_n\cC$ and a morphism $s\colon y\to x$ in $w$.

\sn
Then the canonical functor $w^{-1}S_n\cC\to S_nw^{-1}\cC$ 
induced from $Q_w$ is an equivalence of categories.\\
$\mathrm{(12)}$ 
If the condition in $\mathrm{(11)}$ holds for any non-negative integer $n$, 
then $Q_w$ induces a homotopy equivalence 
$wS_{\cdot}\cC\to iS_{\cdot}w^{-1}\cC$ on $K$-theory.\\
$\mathrm{(13)}$ 
Assume that $\cC$ is closed under taking kernels 
of the morphisms in $\cC$ which are 
epimorphisms in $\cM_A^{\cJ}$. 
Then for any pair of composable morphisms 
$x\onto{f} y\onto{g}  z$ in $\cC$, 
if $gf$ and $f$ are cofibrations in $\cC$, then 
$g$ is also a cofibration in $\cC$.\\
$\mathrm{(14)}$ 
Assume that the condition in $\mathrm{(13)}$ and 
moreover assume that for any element $s$ of $\fS$ and 
for any object $x$ in $\cC$, 
$\coim(s\colon x\to x)$ and 
$\coker(s\colon x\to x)$ 
are in $\cC$. 
Then $\bar{w}:=w\cap\Cof\cC$ is right permutative with 
respect to $\Cof \cC$.
}
\end{para}

\begin{proof}
$\mathrm{(1)}$ 
By Proposition~\ref{prop:Hom set of localized cat}, 
the equality 
$f/1=g/1$ holds in 
$\Hom_{\cM_{\fS^{-1}A}^{\cJ}}(\calL_{\ast}(x),\calL_{\ast}(y))$ 
if and only if there is an element $s$ in $\fS$ 
such that $(f\cdot 1 -g\cdot 1)s=0$. 
Thus we obtain the result.

\sn
$\mathrm{(2)}$ 
Let $x\onto{f}y\onto{g} z$ be a pair of composable morphisms in $\cM_A^{\cJ}$. 
If $f$ is an isomorphism, then $\calL_{\ast}(f)$ is also an isomorphism. 
Therefore $f$ is in $w$. 
Thus $w$ contains all isomorphisms in $\cM_A^{\cJ}$. 
Next if two of $\calL_{\ast}(f)$, $\calL_{\ast}(g) $ and 
$\calL_{\ast}(gf)$ are isomorphisms, 
then the third one is also an isomorphism. 
Hence $w$ is a saturated set. 

\sn
$\mathrm{(3)}$ 
If $f$ is in $w$. 
Then there is a morphism $g/s\colon y\to x$ in $\cM_{\fS^{-1}A}^{\cJ}$ 
such that $fg/s=\id_y/1$ and $gf/s=\id_x/1$ in $\cM_{\fS^{-1}A}^{\cJ}$. 
These equalities mean that there are elements $t$ and $u$ in $\fS$ 
such that $(fg-s\id_y)t=0$ and $(gf-s\id_x)u=0$. 
Conversely if there are a morphism $g\colon y\to x$ and elements 
$s$, $t$ and $u$ in $\fS$ such that 
$fgt=st\id_y$ and $gfu=su\id_x$, 
then 
$f/1\cdot g/s=\id_y/1$ and 
$g/s\cdot f/1=\id_x/1$ in $\cM_{\fS^{-1}A}^{\cJ}$.

\sn
$\mathrm{(4)}$ 
By $\mathrm{(3)}$, 
there are a morphism $h'\colon y\to x$ in $\cC$ 
and elements $s'$, $t$ and $u$ in $\fS$ such that 
$fh't=s't\id_y$ and $h'fu=s'u\id_x$. 
We set $h:=h'gt$ and $s:=s't$. 
Then we have equalities 
$fh=fh'tg=s'tg=sg$. 

\sn
$\mathrm{(5)}$ 
By $\mathrm{(2)}$ and $\mathrm{(4)}$, 
what we need to prove is that $w$ is right permutative. 
First we will show right permutative condition for $w$. 
Namely for any pair of morphisms $f,\ g\colon x\to y$ 
in $\cC$ if there exists a morphism $s\colon y\to z$ in $w$ 
such that $sf=sg$, then we need to produce a morphism $t\colon u\to x$ in $w$ 
such that $ft=gt$. 
Indeed, in this case we have equalities
$$\calL_{\ast}(f)={\calL(s)}^{-1}\calL_{\ast}(sf)=
{\calL(s)}^{-1}\calL_{\ast}(sg)=\calL_{\ast}(g).$$
Hence by $\mathrm{(1)}$, 
there is an element $t$ of $\fS$ such that  
$ft\id_x=tf=tg=gt\id_x$. 
Thus we complete the proof.

\sn
$\mathrm{(6)}$ 
Let $x$ and $y$ be a pair of objects in $\cC$. 
A morphism from $x$ to $y$ in $w^{-1}\cC$ is represented by 
$f/t$ where $f\colon z\to y$ is a morphism in $\cC$ and 
$t\colon z\to x$ is a morphism in $w$ by 
Definition-Theorem~\ref{dfthm:category of fractions}. 
Then by $\mathrm{(3)}$, there is a morphism $g\colon x\to z$ 
and an element $s$ of $\fS$ such that $tg=s\id_x$. 
Therefore we have the equality $f/t=fg/s\id_x$ in $w^{-1}\cC$. 
Thus we complete the proof.

\sn
$\mathrm{(7)}$ 
Let $x$ and $y$ be a pair of objects of $\cC$. 
By virtue of Proposition~\ref{prop:Hom set of localized cat}, 
a morphism $\calL_{\ast}(x)$ to $\calL_{\ast}(y)$ in $\cM^{\cJ}_{\fS^{-1}A}$ 
is represented by $f/s$ where 
$f$ is a morphism from $x$ to $y$ in $\cC$ and 
$s$ is an element of $\fS$. 
Hence we have the equality $\calL_{\ast}(f/s\id_x)=f/s$ 
and the functor 
$w^{-1}\cC\to\cM^{\cJ}_{\fS^{-1}A}$ is full. 
Next we consider a pair of morphisms from $x$ to $y$ in $w^{-1}\cC$. 
By $\mathrm{(6)}$, they are represented by $f/s\id_x$ and $f'/s'\id_x$ 
where $f,\ f'\colon x\to y$ are morphisms in $\cC$ and 
$s$ and $s'$ are elements in $\fS$. 
Assume that we have the equality 
$\calL_{\ast}(f/s\id_x)=\calL_{\ast}(f'/s'\id_x)$. 
Then there is an element $t$ of $\fS$ such that 
$ts'f=tsf'$. 
Then we have the equalities 
$f/s\id_x=ts'f/sts'\id_x=tsf'/sts'\id_x=f'/s'\id_x$. 
Thus the functor $w^{-1}\cC\to \cM_{\fS^{-1}A}^{\cJ}$ induced from 
$\calL_{\ast}$ is faithful.

\sn
$\mathrm{(8)}$ 
Let us consider the commutative diagram in $\cC$ below
$$\xymatrix{
x \ar[d]_a & z \ar@{>->}[l] \ar[r] \ar[d]_b & y \ar[d]^c\\
x' & z' \ar@{>->}[l] \ar[r]  & y'
}$$
where $a$, $b$ and $c$ are morphisms in $\Isom_{\fS,\cC}$. 
What we need to prove is the induced morphism 
$a\sqcup_b c\colon x\sqcup_z y \to x'\sqcup_{y'}z'$ 
is also in $\Isom_{\fS,\cC}$. 
By applying the functor $\calL_{\ast}$ to the diagram above, 
it turns out that $\calL_{\ast}(a\sqcup_b c)=
\calL_{\ast}(a)\sqcup_{\calL_{\ast}(b)}\calL_{\ast}(c)$ 
is an isomorphism by exactness of $\calL_{\ast}$. 
Thus $a\sqcup_b c\colon x\sqcup_z y \to x'\sqcup_{y'}z'$ 
is in $\Isom_{\fS,\cC}$. 

\sn
$\mathrm{(9)}$ 
Let $a\colon x\to y$ be a morphism in $wF_n\cC$. 
Then by $\mathrm{(3)}$, 
for each integer $0\leq k\leq n$, 
there are a morphism $b_k\colon y_k\to x_k$ and elements 
$s_k$, $t_k$ and $u_k$ in $\fS$ such that 
$a_kb_kt_k=s_kt_k\id_{y_k}$ and $b_ka_ku_k=\id_{x_k}$. 
We set $\displaystyle{b_k':=
\left (\prod_{\substack{i=0 \\ i\neq k}}^{n}s_i \right )
\left (\prod_{i=0}^n (u_it_i)\right )b_k }$ and 
$\displaystyle{c:=\prod_{i=0}^{n}(s_it_iu_i)}$. 
Then we have the equalities $a_kb_k'=c\id_{y_k}$ and 
$b_k'a_k=c\id_{x_k}$. 
Therefore for each $0\leq k\leq n-1$, 
we have the equalities
$$i_k^xb_k'a_k=ci_k^x=b'_{k+1}a_{k+1}i^x_k=b'_{k+1}i_{k}^ya_k.$$
Moreover since $\calL_{\ast}(a_k)$ is an isomorphism in 
$\cM_{\fS^{-1}A}^{\cJ}$, 
the diagram below is commutative:
$$
\xymatrix{
\calL_{\ast}(y_k) \ar[r]^{\calL_{\ast}(i_k^y)} \ar[d]_{\calL_{\ast}(b'_k)} & 
\calL_{\ast}(y_{k+1}) \ar[d]^{\calL_{\ast}(b'_{k+1})}\\
\calL_{\ast}(x_k) \ar[r]_{\calL_{\ast}(i_k^x)} & 
\calL_{\ast}(x_{k+1}).
}
$$
Hence by $\mathrm{(1)}$, there is an element $v_k$ in $\fS$ 
such that $v_kb'_{k+1}i_k^y=i_k^xv_kb'_k$. 
We set $\displaystyle{v:=\prod_{i=0}^{n-1}v_i}$ 
and for each integer $0\leq k\leq n$, 
we set $b''_k:=vb'_k$. 
Then for any integer $k$, we have the equalities $i^x_kb''_k=b''_{k+1}i^y_k$. 
Namely $b''$ is a morphism from $y$ to $x$ in $F_n\cC$ and we 
have the equalities $ab''=cv\id_y$ and $b''a=cv\id_x$. 
By applying $\mathrm{(3)}$ to the morphisms $a$ and $b''$ and the element 
$cv$ of $\fS$, 
it turns out that $a$ is in $\Isom_{\fS,\cC} $. 
Conversely we can show that a morphism in $\Isom_{\fS,F_n\cC}$ is contained in 
$wF_n\cC$ by utilizing $\mathrm{(3)}$ again. 
We complete the proof.

\sn
$\mathrm{(11)}$ 
Obviously assumption says that the canonical functor 
$w^{-1}S_n\cC\to S_nw^{-1}\cC$ is essentially surjective. 
Then by $\mathrm{(10)}$, it turns out that 
the functor is an equivalence of categories $w^{-1}S_n\cC\isoto S_nw^{-1}\cC $. 

\sn
$\mathrm{(12)}$ 
By $\mathrm{(11)}$, 
for any non-negative integer $n$, the canonical functor 
$w^{-1}S_n\cC\to S_nw^{-1}\cC$ is an equivalence of categories. 
Hence by Corollary~\ref{cor:comparison of K(C;w) and K(w^-1C)}, 
we obtain the result. 

\sn
$\mathrm{(13)}$ 
First notice that $g$ is a cofibration in $\cM_A^{\cJ}$. 
Let us consider the commutative diagram in $\cM_A^{\cJ}$ below:
$$\xymatrix{
x \ar@{>->}[r] \ar@{=}[d] & y \ar@{->>}[r] \ar@{>->}[d] & y/x \ar@{>->}[d]\\
x \ar@{>->}[r] \ar@{>->}[d] & z \ar@{->>}[r] \ar@{=}[d] & z/x \ar@{->>}[d]\\
y \ar@{>->}[r] & z \ar@{->>}[r] & z/y.
}$$
Here the sequence $x\rinf y\rdef y/x$ 
is a cofibration sequence in $\cM_A^{\cJ}$. 
On the other hand, by assumption, 
$y/x\isoto \ker(z/x\rdef z/y)$ is isomorphic to an object in $\cC$. 
Since the inclusion functor $\cC\rinc \cM_A^{\cJ}$ reflects exactness, 
$g$ is a cofibration in $\cC$.

\sn
$\mathrm{(14)}$ 
What we need to show is that for a morphism 
$f\colon x\to y$ in $\bar{w}$ and a cofibration 
$g\colon z\to y$ in $\cC$, there exists an object $u$ in $\cC$ 
and a morphism $s'\colon u \to z$ in $\bar{w}$ and a cofibration 
$h'\colon u\to x$ in $\cC$ 
such that $gs'=fh'$. 
In this case, 
by $\mathrm{(4)}$, 
there are a morphism $h\colon z\to x$ and an element $s$ of $\fS$ 
such that $gs=fh$. 
Then since $f$ and $g$ are monomorphisms in $\cC$, 
we have equalities $\ker s=\ker gs=\ker fh =\ker h$. 
Thus the morphisms $s\colon z\to z$ and $h\colon z\to x$ 
induce the morphisms $\bar{s}\colon z/\ker s \to z$ 
and $\bar{h}\colon z/\ker s \to x$ which makes the diagram below 
commutative:
$$
\xymatrix{
z/\ker s \ar[r]^{\bar{s}} \ar[d]_{\bar{h}} & z \ar[d]^g\\
x\ar[r]_f & y.
}
$$
By assumption, the sequence $z/\ker s\rinf z \rdef \coker s$ 
is a cofibration sequence in $\cC$. 
Hence $\bar{s}$ is a morphism in $\bar{w}$. 
Then by the equality $g\bar{s}=f\bar{h}$ and $\mathrm{(13)}$, 
$\bar{h}$ is also a cofibration in $\cC$. 
We complete the proof.
\end{proof}

\begin{para}
\label{cor:fibration seq for diagram of modules}
{\bf Corollary.} 
{\it
Let $\cC$ be a full subcategory of $\cM_A^{\cJ}$ and 
we write $w$ for $\Isom_{\fS,\cC}$. 
We assume the following conditions:\\
$\mathrm{(1)}$ 
$\cC$ and $w^{-1}\cC$ are catogories with cofibrations 
such that the inclusion functor 
$\cC\rinc \cM_A^{\cJ}$ and the canonical functor 
$w^{-1}\cC \to \cM_{\fS^{-1}A}^{\cJ}$ induced from 
the base change functor 
$-\otimes_A\fS^{-1}A\colon\cC 
\to \cM_{\fS^{-1}A}^{\cJ}$ are 
exact and reflect exactness.\\
$\mathrm{(2)}$
$\cC$ is closed under taking kernels of 
the morphisms in $\cC$ which are epimorphisms in $\cM_A^{\cJ}$.\\
$\mathrm{(3)}$ 
For any element $s$ of $\fS$ and 
any object $x$ in $\cC$, 
the objects $\coim(s\colon x\to x) $ and 
$\coker(s\colon x\to x)$ are also in $\cC$.\\
$\mathrm{(4)}$ 
For any non-negative integer $n$, and any object $x$ in $S_nw^{-1}\cC$, 
there exists an object $y$ in $S_n\cC$ and a morphism $y\to x$ in $w$.\\
$\mathrm{(5)}$ 
The set of trivail cofibrations 
$\bar{w}:=w\cap \Cof\cC$ is right cofinal in $w$.

\sn
Then the inclusion functor $\cC^w\rinc \cC$ 
and the canonical functor $\cC \to w^{-1}\cC$ induce 
a fibration sequence up to homotopy
$$iS_{\cdot}\cC^w\to iS_{\cdot}\cC\to iS_{\cdot}w^{-1}\cC.$$
}
\end{para}

\begin{proof}
We will apply Corollary~\ref{cor:localization sequence} to the sequence 
$iS_{\cdot}\cC^w\to iS_{\cdot}\cC\to iS_{\cdot}w^{-1}\cC$. 
The assumptions 
in the Corollary follow from 
Definition-Corollary~\ref{defcor:localization of diagram}.
\end{proof}

Finally we give an application of 
Proposition~\ref{prop:abstract approximation} to 
categories of diagrams of modules.

\begin{para}
\label{cor:approximation of diagram of modules}
{\bf Corollary.} 
{\it
Let $\cC$ be a full subcategory of $\cM_A^{\cJ}$ and let 
$\calD$ be a full subcategory of $\cC$. 
We write $w_{\cC}$ and $w_{\calD}$ for 
$\Isom_{\fS,\cC}$ and $\Isom_{\fS,\calD}$ respectively. 
Assume that $\cC$ and $\calD$ are categories with cofibrations 
such that the inculsion functors $\calD\rinc\cC \rinc \cM_A^{\cJ}$ 
are exact and reflect exactness. 
Suppose that the inclusion functor $\calD\rinc \cC$ satisfies 
the following two conditions.\\
$\mathrm{(1)}$ 
For any object $x$ in $\cC$, there are an object $y$ in $\calD$ 
and a morphism $h\colon y\to x$ in $w_{\cC}$.\\
$\mathrm{(2)}$ 
For any cofibration $i\colon x\rinf y$ in $\cC$ and any 
morphism $a\colon y' \to y$ in $w_{\cC}$ with $y'$ in $\calD$, 
there are a cofibration $i'\colon x'\to y'$ in $\calD$ 
and a morphism $b\colon x'\to x$ in $w_{\cC}$ such that $ib=ai'$.
$$
\xymatrix{
x' \ar@{-->}[r]^{b\in w_{\cC}} \ar@{>-->}[d]_{i'} & x \ar@{>->}[d]^{i}\\
y' \ar[r]_{a\in w_{\cC}} & y. 
}
$$
Then the inclusion functor $\calD\rinc \cC$ induces a homotopy equivalence 
$w_{\calD}S_{\cdot}\calD \to w_{\cC}S_{\cdot}\cC$. 
}
\end{para}

\begin{proof}
We will aplly Proposition~\ref{prop:abstract approximation} 
to the inclusion functor 
$(\calD,w_{\calD}) \to (\cC,w_{\cC})$. 
For simplicity we write $w$ for both $w_{\cC}$ and $w_{\calD}$. 
Let $n$ be a non-negative integer. 
Since $S_n\cC$ is equivalent to $F_{n-1}\cC$ and 
$F_{n-1}\cC$ is a full subcategory of $\cC^{[n-1]}$, 
$wS_n\cC$ is right localizing in $S_n\cC$ 
by Definition-Corollary~\ref{defcor:localization of diagram} $\mathrm{(5)}$. 
Similarly $wS_n\calD$ is right localizing in $S_n\calD$. 
We will show that the inclusion functor 
$wS_n\calD \rinc wS_n\cC$ 
induces an equivalence 
of categories $\iota_{\ast}\colon w^{-1}S_n\calD\isoto w^{-1}S_n\cC$. 
To prove that $\iota_{\ast}$ is essentially surjective, 
we claim the following assertion. 

\sn
{\bf Claim.} 
For any object $x$ in $F_n\cC$ and a morphism $s\colon y\to x_n$ in $w_{\cC}$, 
there is an object $x'$ in $F_n\calD$ and a morphism 
$a\colon x'\to x$ in $wF_n\cC$ such that $x'_n=y$ and $a_n=s$. 

\begin{proof}[Proof of claim]
We proceed by induction on $n$. 
If $n=1$, then assertion is trivial, and if $n=2$, assertion is just 
assumption $\mathrm{(2)}$. 
Suppose that assertion is true for $n=n'$ and we will consider 
assertion for $n=n'+1$. 
Let $x$ be an object in $F_{n'+1}\cC$ and 
let $s\colon y\to x_{n'+1}$ be a morphims in $w\cC$. 
Then by assumption $\mathrm{(2)}$, 
there are a cofibration $i'\colon y'\rinf y$ 
and a morphism $t\colon y'\to x_{n'}$ in $w$ such that 
$i_{n'}^xt=si'$. 
By inductive hypothesis, there is an object $x''$ in $F_{n'}\calD$ 
and a morphism $u\colon x'' \to j_{\ast}x$ in 
$wF_{n'}\cC$ such that 
$x''_{n'}=y'$ and $u_{n'}=t$. 
Then we define $x'$ and $a'\colon x'\to x$ to be 
an object in $F_{n'+1}\calD$ and a morphism in $wF_{n'+1}\calD$ as follows. 
$$
x'_k=
\begin{cases}
x''_k & \text{if $k\leq n' $}\\
y & \text{if $k=n'+1$}
\end{cases},\ \ 
i_k^{x'}=
\begin{cases}
i_k^{x''} & \text{if $k\leq n'-1 $}\\
i' & \text{if $k=n'$}
\end{cases},\ \ 
a_k=
\begin{cases}
u_k & \text{if $k\leq n'$}\\
s & \text{if $k=n'+1$}.
\end{cases}
$$
We can prove that $x'$ and $a$ are what we want. 
Thus we complete the proof. 
\end{proof}
In particular, it turns out that 
$\iota_{\ast}$ is essentially surjective. 
To prove $\iota_{\ast}$ 
is fully faithful, we will utilize Lemma~9.1 in \cite{Kel96}. 
To apply the lemma, what we need to prove is that 
for any morphism $s\colon x\to x'$ in $w_{\cC}$ with $x'$ in $\calD$, 
there are an object $x''$ in $\calD$ and a morphism $m\colon x''\to x$ 
such that $sm\colon x'' \to x'$ is in $w_{\calD}$. 
This condition follows from 
Definition-Corollary~\ref{defcor:localization of diagram} $\mathrm{(3)}$. 
Hence the functor $\iota_{\ast}\colon w^{-1}S_n\calD\isoto w^{-1}S_n\cC$ 
is an equivalence of categories. 
Then by Proposition~\ref{prop:abstract approximation}, 
the inclusion functor $\calD\rinc \cC$ induces a homotopy equivalence 
$wS_{\cdot}\calD \to wS_{\cdot}\cC$.
\end{proof}

\mn
SATOSHI MOCHIZUKI\\
{\it{DEPARTMENT OF MATHEMATICS,
CHUO UNIVERSITY,
BUNKYO-KU, TOKYO, JAPAN.}}\\
e-mail: {\tt{mochi@gug.math.chuo-u.ac.jp}}\\

\end{document}